\newtheorem{thm}{Theorem}[section]
\newtheorem*{theo}{Theorem}
\newtheorem{conj}[thm]{Conjecture}
\newtheorem{lemma}[thm]{Lemma}
\newtheorem{prop}[thm]{Proposition}
\newtheorem{corollary}[thm]{Corollary}
\theoremstyle{definition}
\newtheorem{definition}[thm]{Definition}
\theoremstyle{remark}
\newtheorem{rmk}[thm]{Remark}
\def\Q{\mathbb{Q}}
\def\Qp{\mathbb{Q}_p}
\def\Zp{\mathbb{Z}_p}
\def\Fp{\mathbb{F}_p}
\def\bQp{\overline{\mathbb{Q}}_p}
\def\GQp{G_{\Qp}}
\def\IQp{I_{\Qp}}
\def\1{\mathbbm{1}}
\def\GL{\operatorname{GL}}
\def\SL{\operatorname{SL}}
\def\End{\operatorname{End}}
\def\mSpec{\operatorname{m-Spec}}
\def\Spec{\operatorname{Spec}}
\def\WD{\operatorname{WD}}
\def\LL{\operatorname{LL}}
\def\Hom{\operatorname{Hom}}
\def\Homc{\operatorname{Hom}^{\operatorname{cont}}}
\def\det{\operatorname{det}}
\def\tr{\operatorname{tr}}
\def\Sym{\operatorname{Sym}}
\def\Ext{\operatorname{Ext}}
\def\cInd{\operatorname{c-Ind}^G_K}
\def\ctimes{\hat{\otimes}}
\def\Ker{\operatorname{Ker}}
\def\BS{\operatorname{BS}}
\def\pialg{\pi_{\operatorname{alg}}}
\def\pism{\pi_{\operatorname{sm}}}
\def\lalg{\operatorname{l.alg}}
\def\Art{\operatorname{Art}_{\Qp}}
\def\Fil{\operatorname{Fil}}
\def\Sp{\operatorname{Sp}}
\def\Ind{\operatorname{Ind}}
\def\ind{\operatorname{ind}}
\def\Ord{\operatorname{Ord}}
\def\ROrd{\operatorname{R^1Ord}}
\def\st{\tilde{\operatorname{st}}}
\def\rbar{\bar{r}}
\def\Runiv{R_{\rbar}^{\Box}}
\def\Runivz{R_{\rbar}^{\Box, \psi}}
\def\Rpsi{R_{\bar{\psi}}}
\def\runiv{r^{\Box}}
\def\psiuniv{{\psi}^{\operatorname{univ}}}
\def\Rss{R_{\rbar}^{\Box, \psi}(\lambda, \tau)}
\def\Rcr{R_{\rbar}^{\Box, \psi ,cr}(\lambda, \tau)}
\def\Rinf{R_{\infty}}
\def\Rinfz{R_{\infty}^{\psi}}
\def\Minf{M_{\infty}}
\def\rinf{r_{\infty}}
\def\Piinf{\Pi_{\infty}}
\def\Piinfz{\Pi_{\infty}^{\psi}}
\def\Minfz{M_{\infty}^{\psi}}
\def\Modsm{\operatorname{Mod}^{\operatorname{sm}}_G}
\def\Modsmc{\operatorname{Mod}^{\operatorname{sm}}_{G, \psi}}
\def\Modlfin{\operatorname{Mod}^{\operatorname{l.fin}}_{G}}
\def\Modlfinc{\operatorname{Mod}^{\operatorname{l.fin}}_{G, \psi}}
\def\Modladmc{\operatorname{Mod}^{\operatorname{l.adm}}_{G, \psi}}
\def\Modpro{\operatorname{Mod}^{\operatorname{pro}}}
\def\Irr{\operatorname{Irr}_{G, \psi}(k)}
\def\Ban{\operatorname{Ban}^{\operatorname{adm}}_{G, \psi}}
\def\bracketGQp{\llbracket \GQp \rrbracket}
\def\bracketK{\llbracket K \rrbracket}
\def\bracketx{\llbracket x_1, ..., x_d \rrbracket}
\def\Dcris{D_{\operatorname{cris}}}
\def\DdR{D_{\operatorname{dR}}}
\def\Gal{\operatorname{Gal}}
\def\bsigma{\bar{\sigma}}
\def\cV{\check{\mathbf{V}}}
\def\O{\mathcal{O}}
\def\C{\mathfrak{C}}
\def\I{\mathcal{I}}
\def\D{\mathfrak{D}}
\def\T{\mathfrak{T}}
\def\p{\mathfrak{p}}
\def\B{\mathfrak{B}}
\def\m{\mathfrak{m}}
\def\q{\mathfrak{q}}
\def\fI{\mathfrak{I}}
\def\H{\mathcal{H}}
\def\K{\mathcal{K}}
\def\P{\mathcal{P}}
\def\G{\mathcal{G}}
\def\cS{\mathcal{S}}
\def\cC{\mathcal{C}}
\def\cZ{\mathcal{Z}}
\def\cT{\mathcal{T}}
\def\V{\mathbf{V}}
\title{On the automorphy of 2-dimensional potentially semi-stable deformation rings of $\GQp$}
\author{Shen-Ning Tung}
\address{Fakult\"at f\"ur Mathematik\\
  Universit\"at Duisburg-Essen\\
  45127 Essen, Germany}
\email{shen-ning.tung@stud.uni-due.de}
\begin{document}

\begin{abstract}
Using $p$-adic local Langlands correspondence for $\GL_2(\Qp)$, we prove that the support of patched modules $\Minf(\sigma)[1/p]$ constructed in \cite{MR3529394} meet every irreducible component of the potentially semi-stable deformation ring $\Runiv(\sigma)[1/p]$. This gives a new proof of the Breuil-M\'{e}zard conjecture for 2-dimensional representations of the absolute Galois group of $\Qp$ when $p > 2$, which is new in the case $p=3$ and $\rbar$ a twist of an extension of the trivial character by the mod $p$ cyclotomic character. As a consequence, a local restriction in the proof of Fontaine-Mazur conjecture in \cite{MR2505297} is removed.
\end{abstract}

\maketitle


\section*{Introduction}
Let $F / \Qp$ be a finite extension and $G_F$ be the absolutely Galois group of $F$. Fix a finite extension $E / \Qp$ with ring of integers $\O$ and residue field $k$, a continuous representation $\rbar : G_F \rightarrow \GL_n(k)$ and a continuous character $\psi: G_F \rightarrow \O^{\times}$. Denote $\varepsilon$ the $p$-adic cyclotomic character and $\Runiv$ (resp. $\Runivz$) the universal framed deformation ring (resp. universal framed deformation ring with a fixed determinant $\psi \varepsilon$) of $\rbar$. Under the assumption that $p$ does not divide $2n$, \cite{MR3529394} constructs an $\Rinf[G]$-module $\Minf$ by applying Taylor-Wiles-Kisin patching method to algebraic automorphic forms on some definite unitary group, where $\Rinf$ is a complete local noetherian $\Runiv$-algebra with residue field $k$ and $G = \GL_n(F)$. If $y \in \mSpec \Rinf[1/p]$, then
\begin{gather*}
\Pi_y := \Homc_{\O}\big(\Minf \otimes_{\Rinf, y} \kappa(y), E \big)
\end{gather*}
with $\kappa(y)$ the residue field at $y$, is an admissible unitary $E$-Banach space representation of $G$. The composition $\Runiv \rightarrow \Rinf \xrightarrow{y} \kappa(y)$ defines an $\kappa(y)$-valued point $x \in \mSpec \Runiv[1/p]$ and thus a continuous Galois representation $r_x : G_F \rightarrow \GL_n \big(\kappa(y) \big)$. It is expected that the Banach space representation $\Pi_y$ depends only on $x$ and that it should be related to $r_x$ by the hypothetical $p$-adic local Langlands correspondence; see \cite{MR3529394} and \cite{MR3732208} for a detailed discussion.

In this paper, we specialize this construction to the case $F = \Qp$ and $n=2$, in which the $p$-adic local Langlands correspondence is known. The goal of this paper is to prove that every irreducible component of a potentially semi-stable deformation ring is automorphic. This amounts to showing that if $r_x$ is potentially semi-stable with distinct Hodge-Tate weights, then (a subspace of) locally algebraic vectors in $\Pi_y$ can be related to $\WD(r_x)$ via the classical local Langlands correspondence, where $\WD(r_x)$ is the Weil-Deligne representation associated to $r_x$ by Fontaine.

One of the ingredients is a result of Emerton and Pa\v sk\=unas in \cite{MR4077579}, which shows that the action of $\Rinf$ on $\Minf$ is faithful. The statement can also be deduced from the work of  Hellmann and Schraen \cite{MR3542488}. Note that this does not imply that $\Pi_y \neq 0$ since $\Minf$ is not finitely generated over $\Rinf$. We overcome this problem by applying Colmez's Montreal functor $\cV$ to $\Minfz$ (a fixed determinant quotient of $\Minf$) and showing that $\cV(\Minfz)$ is a finitely generated $\Rinfz$-module, where $\Rinfz$ is a complete local noetherian $\Runivz$-algebra. This finiteness result is a key idea in this paper. While our paper is written, we were notified that Lue Pan also obtains a similar result independently in \cite{2019arXiv190107166P}. However, we would like to emphasize that our results are most interesting in the cases not covered by \cite{MR3150248}, whose results Pan uses in an essential way.

Our strategy runs as follows: we first show that $\cV(\Minfz)$ is finitely generated over $\Rinfz \bracketGQp$ and the action of $\Rinfz \bracketGQp$ on $\cV(\Minfz)$ factors through the 2-sided ideal $J$ generated by Cayley-Hamilton relations, which imply that $\cV(\Minfz)$ is a finitely generated module over $\Rinfz$. Using the result of Emerton and Pa\v sk\=unas mentioned above, we show that the action of $\Rinfz$ on $\cV(\Minfz)$ is faithful. Since $\cV(\Minfz)$ is a finitely generated $\Rinfz$-module, this implies that the specialization of $\cV(\Minfz)$ at any $y \in \mSpec \Rinfz[1/p]$ is non-zero, which in turn implies that $\Pi_y$ is nonzero. Combining these, we prove that every irreducible component of a potentially semi-stable deformation ring is automorphic if it contains a point whose associated Galois representation $r_x$ is irreducible. So we only have to handle components in the reducible (thus ordinary) locus, which is known to be automorphic by \cite{MR3072811} except for the case which admits reducible semi-stable non-crystalline components. This gives a new proof of the Breuil-M\'ezard conjecture outside this particular case by the formalism of \cite{MR2505297, MR3292675, MR3134019, MR3306557}. Fortunately, the conjecture for this case can be checked directly by computing Hilbert-Samuel multiplicities, thus we prove the conjecture. This is new in the case $p=3$ and $\rbar$ a twist of the trivial character by the mod $p$ cyclotomic character. 

As a consequence, the Fontaine-Mazur conjecture without the local restriction follows immediately from the original proof of \cite{MR2505297}.

\begin{theo}
Let $p>2$, $S$ a finite set of primes containing $\{p, \infty \}$, $G_{\Q,S}$ the Galois group of the maximal extension of $\Q$ unramified outside $S$, and $G_{\Qp} \subset G_{\Q,S}$ a decomposition group at $p$. Let $\rho : G_{\Q, S} \rightarrow \GL_2(\O)$ be a continuous irreducible odd representation. Suppose that
\begin{enumerate}
\item $\bar{\rho} \vert_{\Q(\zeta_p)}$ is absolutely irreducible.
\item $\rho \vert_{G_{\Qp}}$ is potentially semi-stable with distinct Hodge-Tate weights.
\end{enumerate}
Then (up to twist) $\rho$ comes from a cuspidal eigenforms.
\end{theo}

A similar strategy works in the case when $p=2$ and is handled in \cite{Tung2020}.


The paper is organized as follows. In \S \ref{Section:BM}, we state the Breuil-M\'ezard conjecture for 2-dimensional Galois representations of $\GQp$. In \S \ref{Section:rep}, we recall the $p$-adic representation theory for $\GL_2(\Qp)$ and the Colmez's Montreal functor. In \S \ref{Section:patched}, we define patched modules in our setting and show some of their properties after applying Colmez's functor. In \S \ref{Sectiuon:main}, we prove the automorphy of potentially semi-stable deformation rings and explain how such a result can be applied to the Breuil-M\'ezard and Fontaine-Mazur conjectures.

\section*{Acknowledgments}
Part of this paper is my Ph.D. thesis at the University of Duisburg-Essen. I want to thank my advisor Vytautas Pa\v sk\=unas for suggesting this problem and his constant encouragement to work on this topic. I also thanks Shu Sasaki for his help on the modularity lifting theorem. I am indebted to James Newton, Florian Herzig, and the anonymous referees for pointing out several blunders and for their valuable comments. This research was funded in part by the DFG, SFB/TR 45 "Periods, moduli spaces and arithmetic of algebraic varieties".

\section*{Notation}
\begin{itemize}
\item $p$ is an odd prime number.
\item $E / \Qp$ is a sufficiently large finite extension with ring of integers $\O$, uniformizer $\varpi$ and residue field $k$.
\item For a number field $F$, the completion at a place $v$ is written as $F_v$.
\item For a local or global field $L$, $G_L = \Gal(\bar{L} / L)$. The inertia subgroup for the local field is written as $I_L$.
\item $\varepsilon : \GQp \rightarrow \Zp^{\times}$ is the $p$-adic cyclotomic character, whose Hodge-Tate weight is defined to be 1. 
\item $\omega : \GQp \rightarrow \Fp^{\times}$ is the mod $p$ cyclotomic character, and $\1 : \GQp \rightarrow \Fp^{\times}$ is the trivial character. We also denote other trivial representations by $\1$ if no confusion arises.
\item Normalize the local class field theory $\Art : \Qp^{\times} \rightarrow \GQp^{ab}$ so that $p$ maps to a geometric Frobenius. Then a character of $\GQp$ will also be regarded as a character of $\Qp^{\times}$.
\item For a ring $R$, $\mSpec R$ denotes the set of maximal ideals of $R$.
\item For $R$ a Noetherian ring and $M$ a finite $R$-module of dimension at most $d$, let $\ell_{R_{\p}}(M_{\p})$ denote the length of the $R_{\p}$-module $M_{\p}$, and let $\cZ_d(M) := \sum_{\p} \ell_{R_{\p}} (M_{\p}) \p$ be a cycle, where the sum is taken over all $\p \in \Spec R$ such that $\dim R / \p =d$. If the support of $M$ is equidimensional of some finite dimension $d$, then we write simply $\cZ(M) := \cZ_d(M)$. 
\item For $R$ a Noetherian local ring with maximal ideal $\m$ and $M$ a finite $R$-module, and for an $\m$-primary ideal $\q$ of R, let $e_{\q}(R, M)$ denote the Hilbert-Samuel multiplicity of $M$ with respect to $\q$. We abbreviate $e_{\m}(R, R) = e(R)$.
\item If $A$ is a topological $\O$-module, we write $A^{\vee} := \Homc_{\O}(A, E/\O)$ for the Pontryagin dual of $A$. If $A$ is a pseudocompact $\O$-torsion free $\O$-module, we write $A^d := \Homc_{\O}(A, \O)$ for its Schikhof dual.
\item We write $G = \GL_2(\Qp)$ and $K = \GL_2(\Zp)$, and let $Z = Z(G) \cong \Qp^{\times}$ denote the center of $G$.
\end{itemize}
\section{The Breuil-M\'{e}zard conjecture} \label{Section:BM}
Consider the following data:
\begin{itemize}
\item a pair of integers $\lambda = (a, b)$ with $a > b$,
\item a representation $\tau : I_{\Qp} \rightarrow \GL_2(E)$ with an open kernel,
\item a continuous character $\psi : \GQp \rightarrow \O^{\times}$ such that $\psi \vert_{\IQp} = \varepsilon^{a+b-1} \det \tau$.
\end{itemize}
We call such a triple $(\lambda, \tau, \psi)$ a $p$-adic Hodge type. We say a 2-dimensional continuous representation $r : \GQp \rightarrow \GL_2(E)$ is of type $(\lambda, \tau, \psi)$ if $r$ is potentially semi-stable (= de Rham) such that its Hodge-Tate weights are $(a, b)$, $\WD(r \vert_{I_{\Qp}}) \cong \tau$ and $\det r = \psi \varepsilon$. Here $\WD(r)$ is the Weil-Deligne representation associated to $r$ by Fontaine.

By a result of Henniart in the appendix of \cite{MR1944572}, there is a unique finite-dimensional smooth irreducible $\bQp$-representation $\sigma(\tau)$ (resp. $\sigma^{cr}(\tau)$) of $K$, such that for any infinite-dimensional smooth absolutely irreducible representation $\pi$ of $G$ and the associated Weil-Deligne representation $\LL(\pi)$ attached to $\pi$ via the classical local Langlands correspondence, we have $\Hom_K(\sigma(\tau), \pi) \neq 0$ (resp. $\Hom_K(\sigma^{cr}(\tau), \pi) \neq 0$) if and only if $\LL(\pi) \vert_{I_{\Qp}} \cong \tau$ (resp. $\LL(\pi) \vert_{I_{\Qp}} \cong \tau$ and the monodromy operator $N$ is trivial). We remark that $\sigma(\tau)$ and $\sigma^{cr}(\tau)$ differ only when $\tau \cong \eta \oplus \eta$ is scalar, in which case 
\begin{gather*}
\sigma(\tau) \cong \st \otimes \eta \circ \det, \quad \sigma^{cr}(\tau) \cong \eta \circ \det
\end{gather*}
where $\st$ is the inflation to $\GL_2(\Zp)$ of the Steinberg representation of $\GL_2(\Fp)$.

Enlarging $E$ if needed, we may assume $\sigma(\tau)$ is defined over $E$. We write $\sigma(\lambda) = \Sym^{a-b-1}E^2 \otimes \det^b$ and $\sigma(\lambda, \tau)= \sigma(\lambda) \otimes \sigma(\tau)$. Since $\sigma(\lambda, \tau)$ is a finite-dimensional $E$-vector space, $K = \GL_2(\Zp)$ is compact, and the action of $K$ on $\sigma(\lambda, \tau)$ is continuous, there is a $K$-stable $\O$-lattice $\sigma^{\circ}(\lambda, \tau)$ in $\sigma(\lambda, \tau)$. Then $\sigma^{\circ}(\lambda, \tau) / (\varpi)$ is a smooth finite length $k$-representation of $K$, we will denote by $\overline{\sigma(\lambda, \tau)}$ its semi-simplification. One may show that $\overline{\sigma(\lambda, \tau)}$ does not depends on the choice of a lattice. For each smooth irreducible $k$-representation $\bsigma$ of $K$, we let $m_{\bsigma}(\lambda, \tau)$ be the multiplicity with which $\bsigma$ occurs in $\overline{\sigma(\lambda, \tau)}$. Similarly, we let $\sigma^{cr}(\lambda, \tau)= \sigma(\lambda) \otimes \sigma^{cr}(\tau)$, and we let $m_{\bsigma}^{cr}(\lambda, \tau)$ be the multiplicity with which $\bsigma$ occurs in $\overline{\sigma^{cr}(\lambda, \tau)}$.

Let $\rbar:\GQp \rightarrow \GL_2(k)$ be a continuous representation. We will write $\Runiv$ (resp. $\Runivz$) for the universal framed deformation ring of $\rbar$ (resp. universal framed deformation ring of $\rbar$ with determinant $\psi \varepsilon$) and $\runiv:\GQp \rightarrow \GL_2(\Runiv)$ for the universal framed deformation. If $x \in \mSpec \Runiv[1/p]$, then the residue field $\kappa(x)$ is a finite extension of $E$. Let $\O_{\kappa(x)}$ be the ring of integers in $\kappa(x)$. By specializing the universal framed deformation at $x$, we obtain a continuous representation $r_x:\GQp \rightarrow \GL_2(\O_{\kappa(x)})$, which reduces to $\rbar$ modulo the maximal ideal of $\O_{\kappa(x)}$.

Since $\kappa(x)$ is a finite extension of $E$, $r_x$ lies in Fontaine's $p$-adic Hodge theory \cite{MR1293972}. Kisin has shown the existence of a reduced, 4-dimensional (if non-trivial), $p$-torsion free quotient $\Rss$ (resp. $\Rcr$) of $\Runiv$ such that for all $x \in \mSpec \Runiv[1/p]$, $x$ lies in $\mSpec \Rss[1/p]$ (resp. $\Rcr[1/p]$) if and only if $\runiv_{x}$ is potentially semi-stable (resp. potentially crystalline) of type $(\lambda, \tau, \psi)$.

In \cite{MR1944572}, Breuil and M\'ezard made an conjecture relating the Hilbert-Samuel multiplicity of $\Rss / \varpi$ (resp. $\Rcr / \varpi$) with the number $m_{\bsigma}(\lambda, \tau)$ (resp. $m_{\bsigma}^{cr}(\lambda, \tau)$) defined above.

\begin{conj}[Breuil-M\'ezard] \label{conj:Breuil-Mezard}
For each smooth irreducible $k$-representation $\bsigma$ of $K$, there exists an integer $\mu_{\bsigma}(\rbar)$, independent of $\lambda$ and $\tau$, such that for all $p$-adic Hodge types $(\lambda, \tau, \psi)$, we have equalities:
\begin{align*}
e(\Rss / \varpi) = \sum_{\bsigma} m_{\bsigma}(\lambda, \tau) \mu_{\bsigma}(\rbar) \\
e(\Rcr / \varpi) = \sum_{\bsigma} m_{\bsigma}^{cr}(\lambda, \tau) \mu_{\bsigma}(\rbar)
\end{align*}
where the sum is taken over the set of isomorphism classes of smooth irreducible $k$-representations of $K$.
\end{conj}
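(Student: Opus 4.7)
The plan is to follow the Kisin--Gee formalism: define the integers $\mu_{\bsigma}(\rbar)$ directly from the patched module $\Minf$, and then reduce Conjecture \ref{conj:Breuil-Mezard} to proving that, for every $p$-adic Hodge type $(\lambda,\tau,\psi)$, the patched module $\Minf(\sigma^{\circ}(\lambda,\tau))[1/p]$ is supported on \emph{every} irreducible component of $\Rss[1/p]$ (and analogously for $\Rcr[1/p]$). Concretely I would set $\mu_{\bsigma}(\rbar)$ to be the normalised Hilbert-Samuel multiplicity of $\Minf(\bsigma)$, with the normalisation chosen to kill the contribution of the extra patching variables; this quantity is manifestly independent of $(\lambda,\tau)$. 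The multiplicity formula then follows from standard facts: $\Minf(\sigma^{\circ}(\lambda,\tau))$ is maximal Cohen--Macaulay over $\Rinfz(\lambda,\tau)$, and the semi-simplification of the reduction mod $\varpi$ of a $K$-stable lattice in $\sigma(\lambda,\tau)$ is the direct sum of the $\bsigma$'s with multiplicities $m_{\bsigma}(\lambda,\tau)$, so that
\begin{gather*}
\Z\bigl(\Minf(\sigma^{\circ}(\lambda,\tau))/\varpi\bigr) \;=\; \sum_{\bsigma} m_{\bsigma}(\lambda,\tau)\,\Z\bigl(\Minf(\bsigma)\bigr),
\end{gather*}
and passing to Hilbert-Samuel multiplicities yields the conjectured identity; the crystalline case is entirely parallel with $\sigma^{cr}$ in place of $\sigma$.

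The hard part is therefore the full-support statement for $\Minf(\sigma^{\circ}(\lambda,\tau))$ on $\Rss$. To prove it I would pass to the fixed-determinant quotient $\Minfz$ and apply Colmez's Montreal functor $\cV$. The key finiteness claim, and the main technical input of the paper, is that $\cV(\Minfz)$ is a finitely generated $\Rinfz$-module: a priori it is only an $\Rinfz\bracketGQp$-module, but by exhibiting a surjection from a finitely generated $\Rinfz\bracketGQp$-module and showing that the action factors through a Cayley--Hamilton ideal (so that $\GQp$ acts on the quotient through $2\times 2$ matrices with coefficients in $\Rinfz$), finite generation over $\Rinfz$ follows. Combined with faithfulness of the $\Rinfz$-action on $\Minfz$ established by Emerton--Pa\v sk\=unas \cite{2018arXiv180906598E}, this forces $\cV(\Minfz)$ to have nonzero specialisation at every $y \in \mSpec \Rinfz[1/p]$; via the $p$-adic local Langlands correspondence for $\GL_2(\Qp)$, the locally algebraic vectors of $\Pi_y$ are then the expected classical automorphic piece, and in particular the support meets every irreducible component of $\Rss[1/p]$ that contains a point with $r_x$ irreducible.

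The remaining obstacle, which I expect to be the genuine difficulty, is that the $\cV$-argument only accesses points $y$ for which $r_y$ is absolutely irreducible, so reducible (ordinary) components of $\Rss$ must be treated separately. For most $\rbar$ the automorphy of these ordinary components is already available from \cite{MR3072811} and feeds into the formalism of \cite{MR2505297, MR3292675, MR3134019, MR3306557} to yield the conjecture in full. The genuinely new case is $p=3$ with $\rbar$ a twist of an extension of $\1$ by $\omega$, where reducible semi-stable non-crystalline components appear that are outside the scope of \cite{MR3072811}; here I would have no choice but to compute $e(\Rss/\varpi)$ explicitly from a presentation of the deformation ring and match it directly against $\sum_{\bsigma} m_{\bsigma}(\lambda,\tau)\mu_{\bsigma}(\rbar)$, falling back on hand calculation rather than the patching formalism.
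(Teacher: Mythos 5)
Your proposal follows essentially the same route as the paper: reduce to the full-support statement for $\Minf(\sigma^{\circ})$ via the standard patching/cycle formalism, prove full support on components containing an irreducible point by showing $\cV(\Minfz)$ is finitely generated over $\Rinfz$ (via the Cayley--Hamilton ideal) and faithful, handle the reducible (ordinary) components by potential diagonalizability and the results of \cite{MR3072811}, and dispose of the remaining semi-stable non-crystalline component by a direct Hilbert--Samuel computation. The only cosmetic difference is that you define $\mu_{\bsigma}(\rbar)$ from $\Minf(\bsigma)$ rather than as $e\big(R_{\rbar}^{\Box,\psi,cr}(\lambda,\1\oplus\1)/\varpi\big)$ for a crystalline lift of $\bsigma$, but these agree once full support is established.
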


Conjecture \ref{conj:Breuil-Mezard} was proved by \cite{MR2505297, MR3306557, MR3429471, MR3544298} in all cases if $p \geq 5$ and in the case $\rbar$ is not a twist of an extension of $\1$ by $\omega$ if $p =2, 3$. We prove the conjecture for $p > 2$ in terms of cycles formulated in \cite{MR3134019} by a new global method building on the method of \cite{MR3529394}. The result is new in the case $p = 3$ and $\rbar = (\begin{smallmatrix} \omega & * \\ 0 & \1 \end{smallmatrix}) \otimes \chi$.

\begin{thm} \label{thm:Breuil-Mezard}
Assume $p > 2$. For each smooth irreducible $k$-representation $\bsigma$ of $K$, there exists a 4-dimensional cycle $\cC_{\bsigma}(\rbar)$ of $\Runivz$, independent of $\lambda$ and $\tau$, such that for all $p$-adic Hodge types $(\lambda, \tau, \psi)$, we have equalities of 4-dimensional cycles:
\begin{align*}
\cZ(\Rss / \varpi) = \sum_{\bsigma} m_{\bsigma}(\lambda, \tau) \cC_{\bsigma}(\rbar) \\
\cZ(\Rcr / \varpi) = \sum_{\bsigma} m_{\bsigma}^{cr}(\lambda, \tau) \cC_{\bsigma}(\rbar)
\end{align*}
where the sum is taken over the set of isomorphism classes of smooth  irreducible $k$-representations of $K$.
\end{thm}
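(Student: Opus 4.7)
The plan is to reduce Theorem \ref{thm:Breuil-Mezard} to the automorphy of every irreducible component of the potentially semi-stable and potentially crystalline deformation rings $\Rss$ and $\Rcr$, and then to invoke the formalism of \cite{MR2505297, MR3292675, MR3134019, MR3306557} to turn such an automorphy statement into the required identity of $4$-dimensional cycles. The geometric input will be the fixed-determinant patched module $\Minfz$ of \cite{MR3529394}, studied through Colmez's Montreal functor $\cV$.

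The first step is to analyze $\cV(\Minfz)$. I would first prove that it is finitely generated over $\Rinfz \bracketGQp$, and then show that the $\GQp$-action on it factors through the two-sided ideal generated by Cayley-Hamilton relations of the universal Galois representation; these two together yield finite generation over $\Rinfz$. Combining with the Emerton-Pa\v sk\=unas faithfulness result for the action of $\Rinf$ on $\Minf$ (which descends to a faithful action of $\Rinfz$ on $\cV(\Minfz)$), the support of $\cV(\Minfz)$ must be the whole of $\Spec \Rinfz$. Consequently, for every $y \in \mSpec \Rinfz[1/p]$ the specialization $\cV(\Minfz) \otimes_{\Rinfz, y} \kappa(y)$ is nonzero, which in turn forces the unitary Banach representation $\Pi_y$ to be nonzero.

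Next, given an irreducible component of $\Rss$ or $\Rcr$, I would exhibit an automorphic point on it as follows. If the component meets the locus where $r_x$ is absolutely irreducible, I would use the $p$-adic local Langlands correspondence for $\GL_2(\Qp)$ to extract from the locally algebraic vectors in $\Pi_y$ a nonzero $K$-equivariant map out of $\sigma(\lambda, \tau)$ (respectively $\sigma^{cr}(\lambda, \tau)$); compatibility with the classical local Langlands correspondence then matches the resulting smooth representation with $\WD(r_x)$, and by construction $y$ is automorphic. For components entirely contained in the reducible (hence ordinary) locus I would invoke Geraghty's ordinary modularity lifting \cite{MR3072811}, which covers all cases except those giving rise to reducible semi-stable non-crystalline components, i.e.\ when $\rbar$ is a twist of an extension of $\1$ by $\omega$ at small $p$.

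Finally, to upgrade the resulting componentwise automorphy into the cycle identity, I would follow the formalism of \cite{MR3134019}: define the cycle $\C_{\bsigma}(\rbar)$ via the patched module over the crystalline deformation ring of the Serre weight $\bsigma$, and use the automorphy on each component together with the Breuil-Pa\v sk\=unas parametrisation of locally algebraic vectors to match coefficients on both sides. I expect the main obstacle to be the residual exceptional case $p=3$ with $\rbar$ a twist of $\bigl(\begin{smallmatrix} \omega & * \\ 0 & \1 \end{smallmatrix}\bigr)$, where genuinely new reducible semi-stable non-crystalline components appear and automorphy is not available; there I would perform an explicit Hilbert-Samuel multiplicity computation on the relevant components of $\Rss / \varpi$ to verify the cycle identity by hand, which should suffice to complete the theorem.
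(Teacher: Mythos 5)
Your proposal follows essentially the same route as the paper: finite generation of $\cV(\Minfz)$ over $\Rinfz$ via the Cayley-Hamilton ideal, faithfulness via Emerton--Pa\v sk\=unas giving $\Pi_y \neq 0$, automorphy of components through irreducible points by $p$-adic local Langlands, Geraghty's ordinary lifting for the reducible locus, the cycle formalism of \cite{MR3134019} with $\C_{\bsigma}(\rbar)$ defined by a crystalline lift of $\bsigma$, and a direct multiplicity computation for the exceptional semi-stable non-crystalline case. The plan is correct and matches the paper's argument in both structure and key inputs.
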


\begin{rmk} \label{rmk:cryslift}
If $\tau$ is a 2-dimensional trivial representation of $I_{\Qp}$, then $\sigma^{cr}(\lambda, \tau) = \Sym^{a-b-1} E^2 \otimes \det^b$.	If moreover $1 \leq a-b \leq p$, then $\overline{\sigma^{cr}(\lambda, \tau)} \cong \Sym^{a-b-1} k^2 \otimes \det^b$ is an irreducible representation of $K$ over $k$. Thus in this case $m_{\bsigma} = 0$ unless $\bsigma \cong \Sym^{a-b-1} k^2 \otimes \det^b$, in which case the multiplicity is equal to 1. This observation together with Theorem \ref{thm:Breuil-Mezard} implies that if $\bsigma = \Sym^r k^2 \otimes \det^s$ with $0 \leq r \leq p-1$ and $0 \leq s \leq p-2$ then for
$\lambda = (r+s+1, s)$ and $\tau = \1 \oplus \1$, we have the following equalities
\begin{align*}
\mu_{\bsigma}(\rbar) = e(\Rcr / \varpi); \\
\cC_{\bsigma}(\rbar) = \cZ(\Rcr / \varpi).
\end{align*}
\end{rmk}
\section{Preliminaries on the representation theory of $\GL_2(\Qp)$} \label{Section:rep}
We let $G = \GL_2(\Qp)$, $K = \GL_2(\Zp)$ and $Z \simeq \Qp^\times$ be the center of $G$. Let $B$ be the subgroup of upper triangular matrices in $G$. If $\chi_1$ and $\chi_2$ are characters of $\Qp^{\times}$, then we write $\chi_1 \otimes \chi_2$ for the character of $B$ which maps $(\begin{smallmatrix}a & b \\ 0 & d \end{smallmatrix})$ to $\chi_1(a) \chi_2(d)$.

Let $\Modsm(\O)$ be the category of smooth $G$-representation on $\O$-torsion modules. An object $\pi \in \Modsm(\O)$ is locally finite if for all $v \in \pi$, the $\O[G]$-submodule generated by $v$ is of finite length. Let $\Modlfin(\O)$ to be full subcategory of $\Modsm(\O)$ consisting of all locally finite representations and define $\Modsm(k)$ and $\Modlfin(k)$ in the same way with $\O$ replaced by $k$. Moreover for a continuous character $\psi : Z \rightarrow \O^\times$, adding the subscript $\psi$ in any of the above categories indicates the corresponding full subcategory of $G$-representations with central character $\psi$.

An object $\pi$ of $\Modsm(\O)$ is called admissible if $\pi^H[\varpi^i]$ is a finitely generated $\O$-module for every open compact subgroup $H$ of $G$ and every $i \geq 1$; $\pi$ is called locally admissible if for every $v \in \pi$, the smallest $\O[G]$-submodule of $\pi$ containing $v$ is admissible. Let $\Modladmc(\O)$ be a full subcategory of $\Modsmc(\O)$ consisting of locally admissible representations. In \cite{MR2667882}, Emerton shows that $\Modladmc(\O)$ is an abelian category and $\Modladmc(\O) \cong \Modlfinc(\O)$.

Let $\Modpro_G(\O)$ be the category of compact $\O \bracketK$-modules with an action of $\O[G]$ such that the two actions coincide when restricted to $\O[K]$. This category is anti-equivalent to $\Modsm(\O)$ under the Pontryagin dual $\pi \mapsto \pi^\vee := \Hom_{\O}(\pi, E / \O)$, where the former is equipped with the discrete topology and the latter is equipped with the compact-open topology. Finally let $\C(\O)$ and $\C(k)$ be respectively the full subcategory of $\Modpro_G(\O)$ anti-equivalent to $\Modlfinc(\O)$ and $\Modlfinc(k)$.

Every irreducible object $\pi$ of $\Modsm(\O)$ is killed by $\varpi$, and hence is an object of $\Modsm(k)$. Barthel-Livn\'{e} \cite[Theorem 33]{MR1290194} and Breuil \cite[Theorem 1.6]{MR2018825} have classified the absolutely irreducible smooth representations $\pi$ admitting a central character. They fall into four disjoint classes:
\begin{itemize}
\item characters $\eta \circ \det$;
\item principal series $\Ind_B^G(\chi_1 \otimes \chi_2)$, with $\chi_1 \neq \chi_2$;
\item special series $\Sp \otimes \eta \circ \det$, where $\Sp$ is the Steinberg representation defined by the exact sequence $0 \rightarrow \1 \rightarrow \Ind_B^G \1 \rightarrow \Sp \rightarrow 0$;
\item supersingular $\cInd \sigma / (T, S- s)$, where $\sigma$ is a smooth irreducible $k$-representation of $K$, $T, S \in \End_G(\cInd \sigma)$ are certain Hecke operator defined in \cite{MR1290194, MR2018825}, and $s \in k^{\times}$.
\end{itemize}

Let $\Ban(E)$ be the category of admissible unitary $E$-Banach space representations on which $Z$ acts by $\psi$ (see \cite[\S 3]{MR1900706}). If $\Theta$ is an open bounded $G$-invariant $\O$-lattice in $\Pi \in \Ban(E)$, then the Schikhof dual $\Theta^d$ equipped with the weak topology is an object of $\C(\O)$ (see \cite[Lemmas 4.4, 4.6]{MR3150248}).

\subsection{Blocks}
For $\pi', \pi \in \Modlfinc(k)$, we denote $\Ext^1_{G/Z}(\pi', \pi)$ the Yoneda extension of $\pi'$ by $\pi$ in $\Modlfinc(k)$. Let $\Irr$ be the set of equivalent classes of smooth irreducible $k$-representations of $G$ with central character $\psi$. We say $\pi, \pi' \in \Irr$ is in the same block if there exist $\pi_1, ..., \pi_n \in \Irr$, such that $\pi \cong \pi_1$, $\pi' \cong \pi_n$ and either $\Ext^1_{G, \psi}(\pi_i, \pi_{i+1})$ or $\Ext^1_{G, \psi}(\pi_{i+1}, \pi_{i})$ is nonzero for $1 \leq i \leq n-1$. The classification of blocks can be found in \cite[Corollary 1.2]{MR3306557}. By Proposition 5.34 of \cite{MR3150248}, the category $\Modlfinc(\O)$ decomposes into a direct product of subcategories
\begin{align} \label{equation:blockdecomp}
\Modlfinc(\O) \cong \prod_{\B} \Modlfinc(\O)[\B]
\end{align}
where the product is taken over all the blocks $\B$ and the objects of $\Modlfinc(\O)[\B]$ are representations whose irreducible subquotients lie in $\B$. Let $\C(\O)[\B]$ be the full subcategory of $\C(\O)$ consisting of all $M$ whose irreducible subquotients lie in the dual of $\B$. Thus $\C(\O)[\B]$ is anti-equivalent to $\Modlfinc(\O)[\B]$ under Pontryagin duality and we have another decomposition of categories
\begin{align} \label{equation:Cblockdecomp}
\C(\O) \cong \prod_\B \C(\O)[\B]
\end{align}
coming from (\ref{equation:blockdecomp}).

Let $\T(\O)$ be the full subcategory of $\C(\O)$ whose objects have trivial $\SL_2(\Qp)$-action. It follows from \cite[Lemma 10.25]{MR3150248} that $\T(\O)$ is a thick subcategory of $\C(\O)$ and hence we may consider the quotient category $\D(\O) := \C(\O) / \T(\O)$. Let $\cT: \C(\O) \rightarrow \D(\O)$ be the functor $\cT M = M$ for every object of $\C(\O)$ and $\cT f$ the image of $f: M \rightarrow N$ in $\Hom_{\D(\O)}(\cT M, \cT N)$. It is shown in \cite[\S 10]{MR3150248} that $\D(\O)$ is an abelian category with enough projectives and $\cT$ is an exact functor. We denote $\D(k)$ the full subcategory of $\D(\O)$ consisting of objects killed by $\varpi$.

\begin{rmk}
Note that in \cite[\S 10]{MR3150248}, Pa\v{s}k\={u}nas considers the category $\fI(\O)$ of compact $\O$-modules with trivial $G$-action, which is a thick subcategory of $\C(\O)$ when $\psi = \1$. Since $G / \SL_2(\Qp) Z$ has order prime to $p$ (since $p>2$), $\fI(\O)$ coincides with $\T(\O)$ when $\psi = \1$ and $p>3$ ($\omega \circ \det$ is fixed by $\SL_2(\Qp)$ but not by $\GL_2(\Qp)$).
\end{rmk}

Let $\overline{\chi}: Z \rightarrow k^{\times}$ be a character and $\overline{\psi} = \overline{\chi}^2$. Since $\T(\O)$ is contained in $\C(\O)[\B]$ with $\B = \{\1, \Sp, \Ind^G_B \omega \otimes \omega^{-1} \} \otimes \overline{\chi} \circ \det$ when $p \geq 5$ and $\B = \{\1, \Sp, \omega \circ \det, \Sp \otimes \omega \circ \det \} \otimes \overline{\chi} \circ \det$ when $p=3$, we may build the quotient category $\D(\O)[\B] / \T(\O)$. We write $\D(\O)[\B]$ for $\C(\O)[\B]$ for other blocks and thus (\ref{equation:Cblockdecomp}) induces a decomposition of categories
\[
\D(\O) \cong \prod_{\B} \D(\O)[\B].
\]

\subsection{Colmez's Montreal functor}
In \cite{MR2642409}, Colmez has defined an exact and covariant functor $\V$ from the category of smooth, finite-length representations of $G$ on $\O$-torsion modules with a central character to the category of continuous finite-length representations of $\GQp$ on $\O$-torsion modules. If $\chi: \Qp^{\times} \rightarrow \O^{\times}$ is a continuous character, then we may also consider it as a continuous character $\chi: \GQp \rightarrow \O^{\times}$ via class field theory and for all $\pi \in \Modsm(\O)$ of finite length with a central character we have $\V(\pi \otimes \chi \circ \det) \cong \V(\pi) \otimes \chi$. 

Moreover, it follows from the construction in the \textit{loc. cit.} that $\V(\1) = 0$, $\V(\Sp) = \omega$, $\V(\Ind^G_B \chi_1 \otimes \chi_2 \omega^{-1}) \cong \chi_2$, and $\V(\cInd \Sym^r k^2 / (T, S-1)) \cong \ind{\omega_2^{r+1}}$, where $\omega$ is the mod $p$ cyclotomic character, $\omega_2: I_{\Qp} \rightarrow k^{\times}$ is Serre's fundamental character of level $2$, and $\ind{\omega_2^{r+1}}$ is the unique irreducible representation of $\GQp$ of determinant $\omega^r$ and such that $\ind{\omega_2^{r+1}} |_{I_{\Qp}} \cong \omega_2^{r+1} \oplus \omega_2^{p(r+1)}$ with $0 \leq r \leq p-1$. Note that this determined the image of supersingular representations under $\V$ completely since every supersingular representation is isomorphic to $\cInd \Sym^r k^2 / (T, S-1)$ for some $0 \leq r \leq p-1$ after twisting by a character.

Let $\Modpro_{\GQp}(\O)$ be the category of continuous $\GQp$-representations on compact $\O$-modules. Following \cite[\S 3]{MR3306557}, we define an exact covariant functor $\cV: \C(\O) \rightarrow \Modpro_{\GQp}(\O)$ as follows: Let $M$ be in $\C(\O)$, if it is of finite length, we define $\cV(M) := \V(M^\vee)^\vee(\varepsilon \psi)$ where $\vee$ denotes the Pontryagin dual. For general $M \in \C(\O)$, write $M \cong \varprojlim M_i$, with $M_i$ of finite length in $\C(\O)$ and define $\cV(M) := \varprojlim \cV(M_i)$. With this normalization of $\cV$, we have 
\begin{itemize}
\item $\cV(\pi^{\vee}) = 0$ if $\pi \cong \eta \circ \det$;
\item $\cV(\pi^{\vee}) \cong \chi_1$ if $\pi \cong \Ind^G_B \chi_1 \otimes \chi_2 \omega^{-1}$, where $\omega$ is the mod $p$ cyclotomic character;
\item $\cV(\pi^{\vee}) \cong \eta$ if $\pi \cong \Sp \otimes \eta \circ \det$;
\item $\cV(\pi^{\vee}) \cong \V(\pi)$ if $\pi$ is supersingular.
\end{itemize}
The functor $\cV: \C(\O) \rightarrow \Modpro_{\GQp}(\O)$ kills characters and hence every objects in $\I(\O)$. It follows that $\cV$ factors through $\cT: \C(\O) \rightarrow \D(\O)$. We denote $\cV: \D(\O) \rightarrow \Modpro_{\GQp}(\O)$ by the same letter.

If $\Pi \in \Ban(E)$, we abbreviate $\cV(\Pi) = \cV(\Theta^d) \otimes_\O E$ with $\Theta$ is any open bounded $G$-invariant $\O$-lattice in $\Pi$, so that $\cV$ is exact and contravariant on $\Ban(E)$.

\subsection{Extension Computations when $p=3$ and $\B =\{\1, \Sp, \omega \circ \det, \Sp \otimes \circ \det \}$} \label{subsection:extp=3}
In this section we do some similar computations as in \cite[\S 10]{MR3150248} when $p=3$, $\B =\{\1, \Sp, \omega \circ \det, \Sp \otimes \circ \det \}$ with $\omega: \Qp^{\times} \rightarrow k^{\times}$ the character $\omega(x) = x|x|$ mod $\varpi$, and $\psi = \1$. We write $\operatorname{Mod}^{\operatorname{l.fin}}_{G/Z}(k)$ for $\operatorname{Mod}^{\operatorname{l.fin}}_{G, \1}(\O)$ and $e(\pi', \pi) := \dim_k \Ext^1_{G/Z}(\pi', \pi)$ for $\pi , \pi' \in \operatorname{Mod}^{\operatorname{l.fin}}_{G/Z}(k)$. By \cite[Thereom 11.4]{MR2667891}, we have the following table for $e(\pi', \pi)$:\\
\begin{tabular}{c|cccc} 
$\pi' \backslash \pi$ & $\1$ & $\Sp$ & $\omega \circ \det$ & $\Sp \otimes \omega \circ \det$ \\ \hline 
$\1$ & 0 & 2 & 0 & 0 \\ 
$\Sp$ & 1 & 0 & 1 & 0 \\ 
$\omega \circ \det$ & 0 & 0 & 0 & 2 \\ 
$\Sp \otimes \omega \circ \det$ & 1 & 0 & 1 & 0
\end{tabular}\\

Since $e(\1, \Sp) =2$ there exists a unique smooth $k$-representation $\tau$ with socle $\Sp$ and have an exact sequence:
\begin{equation} \label{equation:tau}
0 \rightarrow \Sp \rightarrow \tau \rightarrow \1 \oplus \1 \rightarrow 0.
\end{equation}

\begin{lemma} \label{lemma:extcompu}
$e(\1, \tau) = 0$, $e(\omega \circ \det, \tau) = 0$, $e(\Sp, \tau) = 2$, $e(\Sp \otimes \omega \circ \det, \tau)= 2$.
\end{lemma}

\begin{proof}
Since $e(\1, \1) = 0$, we obtain the first assertion by applying $\Hom_{G/Z}(\1, -)$ to (\ref{equation:tau}). Since $e(\omega \circ \det, \1) = e(\omega \circ \det, \Sp) = 0$, we get the second assertion by applying $\Hom_{G/Z}(\omega \circ \det, -)$ to (\ref{equation:tau}). Note that by applying Emerton's ordinary part functor $\Ord_B$ \cite{MR2667882} to (\ref{equation:tau}), we obtain the exact sequence
\begin{align*}
0 \rightarrow \Ord_B \Sp \rightarrow \Ord_B \tau \rightarrow (\Ord_B \1)^{\oplus 2} \rightarrow \ROrd_B \Sp \rightarrow \ROrd_B \tau \rightarrow (\ROrd_B \1)^{\oplus 2}.
\end{align*}
It follows from \cite[Theorem 4.2.12]{MR2667883} and the fact that $\operatorname{R}^i \Ord_B = 0$ for $i \geq 2$ (c.f. \cite[Proposition 3.6.1]{MR2667883} and \cite{MR2667892}) that we have
\[
\Ord_B \tau \cong \Ord_B \Sp \cong \1, \quad \ROrd \tau \cong (\ROrd \1)^{\oplus 2} \cong (\omega \otimes \omega)^{\oplus 2}
\]
and thus by the 5 terms sequence for $\Ord_B$ \cite[(3.7.5)]{MR2667883},
\[
\Ext^1_{G/Z}(\Ind^G_B \1, \tau) \cong \Ext^1_{T/Z}(\1, \1), \\
\Ext^1_{G/Z}(\Ind^G_B \omega \otimes \omega, \tau) \cong \Hom_{T/Z}(\omega, \omega^{\oplus 2})
\]
are both 2-dimensional. Since $e(\1, \tau) = 0$, by applying $\Hom_{G/Z}(-, \tau)$ to the short exact sequence $0 \rightarrow \1 \rightarrow \Ind^G_B \1 \rightarrow \Sp \rightarrow 0$ we deduce that $\Ext^1_{G/Z}(\Sp, \tau) \cong \Ext^1_{G/Z}(\Ind^G_B \1, \tau)$, which proves the third assertion. Since $e(\omega \circ \det, \tau) = 0$, by applying $\Hom_{G/Z}(-, \tau)$ to the short exact sequence $0 \rightarrow \omega \circ \det \rightarrow \Ind^G_B \omega \otimes \omega \rightarrow \Sp \otimes \omega \circ \det \rightarrow 0$ we deduce that $\Ext^1_{G/Z}(\Sp \otimes \omega \circ \det, \tau) \cong \Ext^1_{G/Z}(\Ind^G_B \omega \otimes \omega, \tau)$, which proves the last assertion. 
\end{proof}

\begin{lemma} \label{lemma:esepi}
If $\Hom_{\C(\O)}(N, \1 \oplus \omega \circ \det) = 0$ then for every essential epimorphism $q: M \twoheadrightarrow N$, $\cT q: \cT M \twoheadrightarrow \cT N$ is an essential epimorphism in $\D(\O)$.
\end{lemma}

\begin{proof}
The proof of \cite[Lemma 10.27, Lemma 10.29]{MR3150248} works verbatim in our setting.
\end{proof}

For an object $M$ of $\C(\O)$, we denote by $I_G(M):= (M^{\vee}/(M^{\vee})^{\SL_2(\Qp)})^{\vee} \subseteq M$.

\begin{lemma} \label{lemma:Dext}
Let $M$ and $N$ be objects of $\C(\O)$. If $I_G(M)=M$ and $N^{\SL_2(\Qp)} = 0$, then the natural map $\Ext^1_{\C(\O)}(M, N) \rightarrow \Ext^1_{\D(\O)}(\cT M, \cT N)$ is an injection.
\end{lemma}

\begin{proof}
Let $P$ be a projective envelope of $M$ in $\C(\O)$. Note that $\cT M$ is projective in $\D(\O)$ and $\Hom_{\C(\O)}(P, N) \cong \Hom_{\D(\O)}(\cT P, \cT N)$ by \cite[Lemma 10.27]{MR3150248}. Consider the commutative diagram of exact sequences
\[
  \begin{tikzcd}
  \Hom_{\C(\O)}(P, N) \arrow{r}{f} \arrow{d}{\sim} &\Hom_{\C(\O)}(K, N) \arrow{r}{g} \arrow[d] &\Ext^1_{\C(\O)}(M, N) \arrow[r] \arrow[d] & 0 \\
  \Hom_{\D(\O)}(\cT P, \cT N) \arrow{r}{\cT f} &\Hom_{\D(\O)}(\cT K, \cT N) \arrow{r}{\cT g} &\Ext^1_{\D(\O)}(\cT M, \cT N) \arrow[r] & 0
  \end{tikzcd}
\]
coming from applying $\Hom_{\C(\O)}(-, N)$ to the short exact sequence $0 \rightarrow K \rightarrow P \rightarrow M \rightarrow 0$ and the functoriality of $\cT$.

We claim that the middle vertical map is injective. Suppose this is the case. For each $c \in \Ext^1_{\C(\O)}(M, N)$ such that $\cT c = 0$, we may find $d \in \Hom_{\C(\O)}(K, N)$ for which $g(d) = c$. Since $\cT g(\cT d) = \cT g(d) = \cT c = 0$, there exists $e \in \Hom_{\C(\O)}(P, N)$ such that $\cT f(\cT e) = \cT d$. It follows from the claim that $f(e) = d$, and thus $c = g \circ f(d) = 0$.

To prove the claim, we consider the exact sequence
\[
0 \rightarrow \Hom_{\C(\O)}(K / I_G(K), N) \rightarrow \Hom_{\C(\O)}(K, N) \rightarrow \Hom_{\C(\O)}(I_G(K), N)
\]
coming from applying $\Hom_{\C(\O)}(-, N)$ to the short exact sequence $0 \rightarrow I_G(K) \rightarrow K \rightarrow K/I_G(K) \rightarrow 0$. By \cite[Lemma 10.26]{MR3150248}, the last term in the exact sequence is isomorphic to $\Hom_{\D(\O)}(\cT K, \cT N)$. Thus it suffices to show that $\Hom_{\C(\O)}(K / I_G(K), N) = 0$, which follows from $(K / I_G(K))^{\SL_2(\Qp)} = K / I_G(K)$ and $N^{\SL_2(\Qp)}=0$.
\end{proof}

Denote $T_\1 := \cT((\Ind^G_B \1)^{\vee})$ and $T_\omega := \cT((\Ind^G_B \omega \otimes \omega)^{\vee})$, both of which are objects in $\D(k)$. Note that since $\cT(\1) = \cT(\omega \circ \det) \cong 0$ in $\D(k)$ and $\cT$ is exact, we have
\begin{align*}
&T_\1 \cong \cT \Sp^{\vee} \cong \cT \tau^{\vee}, &&T_\omega \cong \cT (\Sp \otimes \omega \circ \det)^{\vee}, \\
&\cV(T_\1) \cong \cV(\Sp^\vee) \cong \cV(\tau^{\vee}) \cong \1, &&\cV(T_\omega) \cong \cV((\Sp \otimes \omega \circ \det)^\vee) \cong \omega.
\end{align*}
We denote $\Ext^1_{\D(\O)}(M, N)$ (resp. $\Ext^1_{\D(k)}(M, N)$) the Yoneda extension groups of $M$ by $N$ in $\D(\O)$ (resp. $\D(k)$).

\begin{lemma} \label{lemma:quotextcompu}
$\Ext^1_{\D(k)}(T_\1, T_\1)$, $\Ext^1_{\D(k)}(T_\omega, T_\omega)$, $\Ext^1_{\D(k)}(T_\1, T_\omega)$, and $\Ext^1_{\D(k)}(T_\omega, T_\1)$ are all 2-dimensional.
\end{lemma}

\begin{proof}
By replacing \cite[Lemma 10.12]{MR3150248} with Lemma \ref{lemma:extcompu}, the proof of \cite[Lemma 10.34]{MR3150248} works verbatim in our setting. We include the proof for the sake of completeness. We first note that it suffices to show the assertion for $\Ext^1_{\D(k)}(T_\1, T_\1)$ and $\Ext^1_{\D(k)}(T_\1, T_\omega)$ since $\Ext^1_{\D(k)}(T_\1, T_\1) \cong \Ext^1_{\D(k)}(T_\omega, T_\omega)$ and $\Ext^1_{\D(k)}(T_\1, T_\omega) \cong \Ext^1_{\D(k)}(T_\omega, T_\1)$ by twisting. Let $J_{\Sp}$ (resp. $J_{\Sp \otimes \omega \circ \det}$) be an injective envelope of $\Sp$ (resp. $\Sp \otimes \omega \circ \det$) in $\operatorname{Mod}^{\operatorname{l.fin}}_{G/Z}(k)$. It follows from Lemma \ref{lemma:extcompu} that we have an exact sequence:
\begin{equation} \label{equation:injenv}
0 \rightarrow \tau \rightarrow J_{\Sp} \rightarrow J_{\Sp}^{\oplus 2} \oplus J_{\Sp \otimes \omega \circ \det}^{\oplus 2}.
\end{equation}
Moreover, if we let $\kappa$ be the cokernel of the second arrow then the monomorphism $\kappa \hookrightarrow J_{\Sp}^{\oplus 2} \oplus J_{\Sp \otimes \omega \circ \det}^{\oplus 2}$ induced by the first arrow is essential. Let $\pi$ be $\Sp$ or $\Sp \otimes \omega \circ \det$ then we know from Lemma \ref{lemma:esepi} that $\cT J_\pi^\vee$ is a projective envelope of $\pi^\vee$ in $\D(k)$. By dualizing (\ref{equation:injenv}), applying $\cT$ and then $\Hom_{\D(k)}(-, \cT \pi^\vee)$ we obtain
\[
\Ext^1_{\D(k)}(T_\1, \cT \pi^\vee) \cong \Hom_{\D(k)}(\cT \kappa^\vee, \cT \pi^\vee) \cong \Hom_{\D(k)}(\cT J^{\vee}, \cT \pi^\vee),
\]
where $J = J_{\Sp}^{\oplus 2} \oplus J_{\Sp \otimes \omega \circ \det}^{\oplus 2}$. The last isomorphism follows from the fact that $\cT \pi^\vee$ is irreducible and $\cT J^\vee \twoheadrightarrow \cT \kappa^\vee$ is essential \cite[Lemma 10.29]{MR3150248}. Hence $\Ext^1_{\D(k)}(T_\1, T_\1)$ and $\Ext^1_{\D(k)}(T_\1, T_\omega)$ are 2-dimensional.
\end{proof}

\begin{lemma} \label{lemma:injExtV}
The functor $\cV$ induces an injection
\[
\cV: \Ext^1_{\D(\O)}(S_1, S_2) \hookrightarrow \Ext^1_{\O[\GQp]}(\cV(S_1), \cV(S_2))
\]
for $S_1, S_2 \in \{T_\1, T_\omega\}$.
\end{lemma}

\begin{proof}
Note that Proposition \uppercase\expandafter{\romannumeral7}.4.12, 4.24, and 4.25 in \cite{MR2642409} hold true when $p=3$. Thus the proof of \cite[Lemma 10.35]{MR3150248} works verbatim in our setting with Lemma 10.34 of \textit{loc. cit.} replaced by Lemma \ref{lemma:quotextcompu} above.
\end{proof}

\subsection{A finiteness lemma}
\begin{lemma} \label{lemma:HomVinj}
Let $M, N \in \D(\O)$ be of finite length. Then $\cV$ induces:
\begin{align*}
\Hom_{\D(\O)}(M, N) &\cong \Hom_{\GQp}(\cV(M), \cV(N)), \\
\Ext^1_{\D(\O)}(M, N) &\hookrightarrow \Ext^1_{\GQp}(\cV(M), \cV(N)).
\end{align*}
\end{lemma}

\begin{proof}
This is proved in \cite[Lemma A1]{MR2667891} for supersingular blocks and in \cite{MR3150248} for the remaining except when $p=3$ and $\B =\{\1, \Sp, \omega \circ \det, \Sp \otimes \omega \circ \det\} \otimes \delta \circ \det$, where $\delta: \Qp^{\times} \rightarrow k^{\times}$ is a smooth character. The argument in Pa\v{s}k\={u}nas' proof is by induction on $\ell(M) + \ell(N)$, where $\ell$ denotes the number of irreducible subquotients, and thus reduces the assertion to the case that both $M$ and $N$ are irreducible. Note that in the exceptional case, we may assume that $\delta = 1$ in which case the assertion for $\Hom$ is immediate and the assertion for $\Ext^1$ follows from Lemma \ref{lemma:injExtV}. This proves the lemma.
\end{proof}

Let $\Modpro_{\GQp}(\O)[\B]$ be the full subcategory of $\Modpro_{\GQp}(\O)$ with object $\rho$ such that there exists $M \in \C(\O)[\B]$ such that $\rho \cong \cV(M)$.

\begin{prop} \label{prop:equivB}
The functor $\cV$ induces an equivalence of categories between $\D(\O)[\B]$ and $\Modpro_{\GQp}(\O)[\B]$.
\end{prop}

\begin{proof}
This is due to \cite{MR3150248} except the case that $p=3$ and $\B = \{\1, \Sp, \omega \circ \det, \Sp \otimes \circ \det \}$. Note that in the exceptional case, the proof of \cite[Proposition 10.36]{MR3150248} works verbatim with Lemma 10.35 in the \textit{loc. cit.} replaced by Lemma \ref{lemma:injExtV} above. This proves the proposition.
\end{proof}

\begin{prop} \label{prop:fgadm}
Let $\pi \in \Modlfinc(k)$ be admissible. Then $\cV(\pi^\vee)$ is  finitely generated as $k \bracketGQp$-module.
\end{prop}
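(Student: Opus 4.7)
The plan is to reduce to a single block of $\c(\O)$ under Pa\v sk\=unas's block decomposition, lift $\pi^\vee$ from its finite cosocle to a projective presentation, and then invoke the explicit description of $\cV$ on projective generators.

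First, since $\pi$ is admissible and locally finite over the finite residue field $k$, one decomposes $\pi$ by central characters: only finitely many smooth characters $\psi: Z \to k^\times$ exist over $k$, so $\pi = \bigoplus_\psi \pi_\psi$ is a finite direct sum of admissible objects in $\Modlfinc(k)$. A further decomposition according to the blocks of $\Modlfinc(k)$ (Proposition~5.34 of \cite{MR3150248}) again leaves only finitely many non-zero summands, since each $\Modlfinc(k)^\B$ contains only finitely many irreducibles when $k$ is finite. This reduces the problem to $\pi \in \Modlfinc(k)^\B$ for a single block $\B$.

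Admissibility of $\pi$ translates via Pontryagin duality to $\pi^\vee$ being topologically finitely generated over $k\bracketK/(z - \psi(z) : z \in Z \cap K)$, so the cosocle of $\pi^\vee$ in $\c(\O)^\B$ is a finite-length semisimple object. Since $\c(\O)^\B$ has enough projectives --- the projective covers $\tilde{P}_i$ of the finitely many irreducibles $\pi_i^\vee \in \B$ --- we lift the surjection from the cosocle to obtain a presentation
\[
\bigoplus_{i} \tilde{P}_i^{n_i} \twoheadrightarrow \pi^\vee
\]
with only finitely many terms. Applying the exact functor $\cV$ then produces a surjection $\bigoplus_i \cV(\tilde{P}_i)^{n_i} \twoheadrightarrow \cV(\pi^\vee)$, so it suffices to show $\cV(\tilde{P}_i)$ is finitely generated as a $k\bracketGQp$-module for each projective cover $\tilde{P}_i$.

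The main obstacle is this final step. By \cite{MR3150248}, $\cV(\tilde{P}_i)$ is essentially the universal deformation of the finite-dimensional Galois representation $\cV(\pi_i^\vee)$ over the complete Noetherian local endomorphism ring $R_i := \End_{\c(\O)}(\tilde{P}_i)$, and is free of finite rank as an $R_i$-module. The question is whether the $\GQp$-action alone suffices to generate this module from finitely many elements. When $\cV(\pi_i^\vee)$ is absolutely irreducible this follows from Burnside's theorem applied to the universal deformation, so that $\cV(\tilde{P}_i)$ is in fact topologically cyclic over $k\bracketGQp$; in the reducible and exceptional blocks --- notably the delicate $p=3$ case with $\rbar$ a twist of an extension of $\1$ by $\omega$ --- one appeals to Pa\v sk\=unas's block-by-block structural results, supplemented by the extension computations of \S\ref{subsection:extp=3}, to verify that the image of $\GQp$ in $M_d(R_i)$ still topologically generates $R_i$.
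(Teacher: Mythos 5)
Your reduction steps are sound and run parallel to the paper's: restricting to a single block, observing that admissibility forces the $G$-socle of $\pi$ (equivalently the cosocle of $\pi^\vee$ in $\c(\O)$) to have finite length, and covering $\pi^\vee$ by finitely many projective envelopes $\tilde{P}_i$ so that exactness of $\cV$ reduces everything to the finite generation of each $\cV(\tilde{P}_i)$ over $k\bracketGQp$. The problem is the final step, and it is not a technicality. You invoke the description of $\cV(\tilde{P}_i)$ as (a deformation of $\cV(\pi_i^\vee)$ that is) free of finite rank over $R_i=\End_{\c(\O)}(\tilde{P}_i)$. That is one of the main structural theorems of \cite{MR3150248}, proved block by block, and it is precisely \emph{not} available in the case this paper cares about most: $p=3$ with $\rbar$ a twist of an extension of $\1$ by $\omega$ (the block $\{\1,\Sp,\omega\circ\det,\Sp\otimes\omega\circ\det\}$ up to twist). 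The paper says so explicitly in the introduction, and \S\ref{subsection:extp=3} exists exactly because the structure of the projective envelopes in that block is unknown; the $\Ext^1$ computations there determine how $\cV$ acts on extension classes, but they do not determine $\End(\tilde{P}_i)$ nor the module structure of $\cV(\tilde{P}_i)$ over it. So "appeal to Pa\v sk\=unas's structural results, supplemented by \S\ref{subsection:extp=3}" does not close the argument in the one case where the proposition is genuinely new. Even in the covered blocks, your Burnside step only works cleanly when $\cV(\pi_i^\vee)$ is absolutely irreducible (there the traces topologically generate $R_i$ by Carayol--Mazur, so the closed image of $k\bracketGQp$ in $M_2(R_i)$ is all of $M_2(R_i)$); for the reducible blocks the endomorphism ring is larger than what traces see, and you give no argument.

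The paper's proof avoids the structure theory of $\tilde{P}_i$ entirely. It first shows that the $\GQp$-action on $\cV(\pi^\vee)$ factors through a quotient group $\G$ that is an extension of a finite group by a pro-$p$ group (because the common kernel $\K$ of the finitely many irreducible constituents acts unipotently on any finite-length piece). Topological Nakayama over $k\llbracket\P\rrbracket$ then converts finite generation over $k\bracketGQp$ into finiteness of the cosocle of $\cV(\pi^\vee)$, and that finiteness follows from the finite length of the $G$-socle of $\pi$ together with the comparison $\Hom_G(\kappa,\pi)\cong\Hom_{\GQp}(\cV(\pi^\vee),\cV(\kappa^\vee))$ of Lemma \ref{lemma:HomVinj} --- which is exactly where Corollary \ref{cor:ExtSpE} and the computations of \S\ref{subsection:extp=3} are used, and which holds in the exceptional block. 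If you want to rescue your approach, you would need to replace the appeal to freeness of $\cV(\tilde{P}_i)$ over $R_i$ by an argument of this softer Hom/cosocle type; as written, the last paragraph is a gap.
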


\begin{proof}
Without loss of generality, we can assume $\pi \in \Modlfin(k)[\B]$, hence has finitely many irreducible subquotients $\pi_1, ..., \pi_n$ up to isomorphism. Since $\rho_i := \cV(\pi_i^\vee)$ is a finite-dimensional $\GQp$-representation over $k$, $\Ker \rho_i$ has finite index in $\GQp$. It follows that $\K := \bigcap_i \Ker \rho_i$ is of finite index in $\GQp$ and $\H := \GQp / \K$ is a finite group. Denote $\P$ by the maximal pro-$p$ quotient of $\K$ and $\G$ by the quotient of $G$ defined by the diagram
\[
  \begin{tikzcd}
  0 \arrow[r] &\K \arrow[r] \arrow[d, twoheadrightarrow] &\GQp \arrow[r] \arrow[d, twoheadrightarrow] &\H \arrow[r] \ar[equal]{d} &0 \\
  0 \arrow[r] &\P \arrow[r] &\G \arrow[r] &\H \arrow[r] &0 
  \end{tikzcd}
\]

We claim that the action of $\GQp$ on $\cV(\pi^\vee)$ factors through $\G$. Since $\pi$ can be written as an inductive limit of finite length smooth representations, it suffices to prove the claim for $\pi$ of finite length by the definition of $\cV$. Set $\rho = \cV(\pi^\vee)$ a finite-dimensional $\GQp$-representation over $k$. Since $\K$ acts trivially on each irreducible pieces, $\rho(\K)$ has to be contained in the upper triangular unipotent matrices, hence it's a $p$-group. The claim follows because the action of $\K$ factors through $\P$.

We have the following equivalent conditions:
\begin{align*}
\cV(\pi^\vee) \text{ is a finitely generated } k \bracketGQp \text{-module} 
\Longleftrightarrow & \cV(\pi^\vee) \text{ is a finitely generated } k \llbracket \G \rrbracket \text{-module} \\
& (\text{thus a finitely generated } k \llbracket \P \rrbracket \text{-module}) \\ 
\Longleftrightarrow & \cV(\pi^\vee)_\P \text{ is a finitely generated } k[\H] \text{-module} \\
& (\text{thus a finite-dimensional } k \text{-vector space}) \\
\Longleftrightarrow & \text{the cosocle of } \cV(\pi^\vee) \text{ is of finite length}
\end{align*}
where the cosocle is defined to be the maximal semisimple quotient. The first equivalence is due to the claim and the second equivalence follows from Nakayama lemma for compact modules (see \cite[Corollary 1.5]{MR0202790}). The last equivalence follows from the fact that the cosocle of $\cV(\pi^\vee)$ in the category of compact $k \bracketGQp$-modules coincides with the cosocle of $\cV(\pi^\vee)_\P$ in the category of $k[\H]$-modules.

Since $\pi$ is admissible, its pro-$p$ Iwahori fixed part is of finite-dimensional. Thus there are only finitely many irreducible representations in the socle of $\pi$, which implies that cosocle of $\cV(\pi^\vee)$ is of finite length by applying Lemma \ref{lemma:HomVinj} to an inductive limit of finite length smooth representations defining $\pi$. This proves the proposition.
\end{proof}
\section{Patched modules}\label{Section:patched}
From now on we make the assumption $p > 2$. Fix a continuous representation $\rbar: \GQp \rightarrow \GL_2(k)$ and enlarge $k$ if necessary. Corollary A.7 of \cite{MR3134019} (with $K = \Qp$) provides us with an imaginary CM field $F$ with maximal totally real subfield $F^+$, and a continuous representation $\bar{\rho}: G_F \rightarrow \GL_2(k)$ such that $\bar{\rho}$ is a suitable globalization of $\rbar$ in the sense of \S 2.1 of \cite{MR3529394}. 

Let $T = S_p \cup \{v_1\}$, with $S_p$ be the set of places of $F^+$ lying above $p$ and $v_1$ a place prime to $p$ and satisfying the properties in \S 2.3 of \cite{MR3529394}. For each $v \in S_p$, we let $\tilde{v}$ be a choice of a place of $F$ lying over $v$, with the property that $\bar{\rho} \vert_{G_{F_{\tilde{v}}}} \cong \rbar$. (Such a choice is possible by our assumption that $\bar{\rho}$ is a suitable globalization of $\rbar$.) We let $\tilde{T}$ denote the set of places $\tilde{v}$, $v \in T$. For each $v \in T$, we let $R^{\Box}_{\tilde{v}}$ denote the maximal reduced and $p$-torsion free quotient of the universal framed deformation ring of $\bar{\rho} \vert_{G_{F_{\tilde{v}}}}$. We fix a place $\p \in S_p$. For each $v \in S_p \setminus \{\p\}$, we write $\bar{R}^{\Box}_{\tilde{v}}$ for an irreducible, reduced and $p$-torsion free potentially Barsotti-Tate quotient of $R^{\Box}_{\tilde{v}}$ (given by an irreducible component of a potentially crystalline deformation ring of $\rbar$ with Hodge type $(1, 0)$ and some inertial type).

\begin{rmk}
Any $\rbar$ admits a potentially Barsotti-Tate lift \cite[Proposition 7.8.1]{2009arXiv0905.4266S} and any such lift is potentially diagonalizable \cite[Lemma 4.4.1]{MR3292675}.
\end{rmk}

Consider the deformation problem
\begin{gather*}
 \cS = (F/F^+, T, \tilde{T}, \O, \varepsilon^{-1}, \{R^{\Box}_{\tilde{v}_1} \} \cup \{ R^{\Box}_{\tilde{\p}} \} \cup \{\bar{R}^{\Box}_{\tilde{v}}\}_{v \in S_p \setminus \{\p\}}).
\end{gather*}
With this choice of deformation problem and globalization $\bar{\rho}$ for our local Galois representation $\rbar$, the Taylor-Wiles-Kisin patching argument carried out in \S 2 of \cite{MR3529394} produces for some $d > 0$ and $\O[G]$-module $\Minf$ with an arithmetic action of $\Rinf = \Runiv \ctimes_{\O, v \in S_p - \{ \p \} } \bar{R}^{\Box}_{\tilde{v}} \ctimes_{\O} R^{\Box}_{\tilde{v}_1} \bracketx$ (note that $\Runiv \cong R^{\Box}_{\tilde{\p}}$) in the sense of \cite[\S 3]{MR3732208}, which means the $\Rinf[G]$-module $\Minf$ satisfies the following properties:

\begin{enumerate}[label=\textbf{AA\arabic*}]
\item \label{AA1} $\Minf$ is a finitely generated $\Rinf \bracketK$-module.
\item \label{AA2} $\Minf$ is projective in the category of pseudocompact $\O \bracketK$-modules. \\
\item \label{AA3} For $\sigma = \sigma(\lambda, \tau)$ or $\sigma^{cr}(\lambda, \tau)$, we define
\begin{gather*}
\Minf(\sigma^\circ) := \big( \Homc_{\O \bracketK}( \Minf, (\sigma^\circ)^d ) \big)^d \cong \Minf \otimes_{\O \bracketK} \sigma^\circ,
\end{gather*}
where we are considering continuous homomorphism for the profinite topology on $\Minf$ and the $p$-adic topology on $(\sigma^{\circ})^d$. This is a finitely generated $\Rinf$-module by (\ref{AA1}) and corollary 2.5 of \cite{MR3306557}. The action of $\Rinf$ on $\Minf(\sigma^\circ)$ factors through $\Rinf(\sigma) := \Runiv(\sigma) \bracketx$, where $\Runiv(\sigma) = \Runiv(\lambda, \tau)$ (resp. $R_{\rbar}^{\Box, cr}(\lambda, \tau)$) if $\sigma = \sigma(\lambda, \tau)$ (resp. $\sigma^{cr}(\lambda, \tau)$). Furthermore, $\Minf(\sigma^\circ)$ is maximal Cohen-Macaulay over $\Rinf(\sigma)$, and the $\Rinf(\sigma)[1/p]$-module $\Minf(\sigma^\circ)[1/p]$ is locally free of rank 1 over its regular locus.
\item \label{AA4} For such $\sigma$, the action of $\H(\sigma):= \End_{G}(\cInd\sigma)$ on $\Minf(\sigma^\circ)[1/p]$ is given by the composite
\begin{align*}
\H(\sigma) \xrightarrow{\eta} \Runiv(\sigma)[1/p] \rightarrow \Rinf(\sigma)[1/p] 
\end{align*}
where $\H(\sigma) \xrightarrow{\eta} \Runiv(\sigma)[1/p]$ is defined in \cite[Theorem 4.1]{MR3529394} for $\sigma^{cr}(\lambda, \tau)$ and \cite[Theorem 3.6]{2018arXiv180301610P} for $\sigma(\lambda, \tau)$.
\end{enumerate}

\begin{definition}
By \cite[Lemma 4.18 (2)]{MR3529394} and \cite[Proposition 5.5]{2018arXiv180301610P}, the support of $\Minf(\sigma)$ is a union of irreducible components of $\Spec \Runiv(\sigma)$, which we call the set of automorphic components of $\Spec \Runiv(\sigma)$.
\end{definition}

Let $\Piinf := \Homc_\O(\Minf, E)$. If $y \in \mSpec \Rinf[1/p]$, then 
\begin{gather*}
\Pi_y := \Homc_\O(\Minf \otimes_{\Rinf, y} \O_{\kappa(y)}, E) = \Piinf[\m_y]
\end{gather*}
is an admissible unitary $E$-Banach space representation of $G$; see \cite[Proposition 2.13]{MR3529394}. The composition $\Runiv \rightarrow \Rinf \xrightarrow{y} \O$ defines an $\O$-valued point $x \in \Spec \Runiv$ and thus a continuous representation $r_x : \GQp \rightarrow \GL_2(\O)$. We say $y$ is crystalline (resp. semi-stable, resp. potentially crystalline, resp. potentially semi-stable) if $r_x$ is crystalline (resp. semi-stable, resp. potentially crystalline, resp. potentially semi-stable).

If $r: \GQp \rightarrow \GL_2(E)$ is potentially semi-stable of Hodge type $\lambda$, we set 
\begin{gather*}
\BS(r) :=   \pism(r) \otimes \pialg(r),
\end{gather*}
where $\pism(r)$ is the smooth representation of $G$ associated to the Frobenius semi-simplification of $\WD(r)$ via local Langlands correspondence, and $\pialg(r) = \det^b \otimes \Sym^{a-b-1}E^2$ is an algebraic representation of $G$. For $y \in \mSpec \Rinf[1/p]$ potentially crystalline (resp. potentially semi-stable), it is shown in \cite[Theorem 4.35]{MR3529394} (resp. \cite[Theorem 6.1]{2018arXiv180301610P}) that if $\pism(r_x)$ is generic irreducible and $x$ lies on an automorphic component of a potentially crystalline (resp. potentially semi-stable) deformation ring of $\rbar$, then the space of locally algebraic vectors $\Pi_y^{\lalg}$ in $\Pi_y$ is isomorphic to $\BS(r_x)$.

\begin{rmk} \label{rmk:crysBanach}
If $y \in \mSpec \Rinf[1/p]$ lies on an automorphic component and $r_x$ is crystabelline, then $\pism(r_x)$ is a principal series representation and $\BS(r_x)$ admits a unique unitary Banach space completion which is topologically irreducible c.f. \cite[Theorem 4.3.1, Corollary 5.3.1, Corollary 5.3.1]{MR2642406} for $r_x$ absolutely irreducible and \cite[Proposition 2.2.1]{MR2667890} for $r_x$ reducible. Thus the injection $\BS(r_x) \hookrightarrow \Pi_y$ gives rise to an embedding of Banach space representations $\widehat{\BS(r_x)} \hookrightarrow \Pi_y$.
\end{rmk}

\subsection{The action of the center $Z$}\label{section:centralchar}
We now describe the action of the center $Z$ of $G$ on $\Minf$. The determinant of the universal lifting $\runiv$ of $\rbar$ is a character $\det \runiv : \GQp^{ab} \rightarrow (\Runiv)^{\times}$ lifting $\det \rbar$. Hence it factors through $\varepsilon \psiuniv: \GQp \rightarrow \Rpsi^{\times}$, where $\Rpsi$ is the universal deformation ring of $\overline{\psi} = \omega^{-1} \det \rbar$ and $\psiuniv$ is the universal deformation of $\bar{\psi}$. Via pullback along the natural homomorphism $\O[Z] \rightarrow \Rpsi[Z]$, the maximal ideal of $\Rpsi[Z]$ generated by $\varpi$ and the elements $z - \psiuniv \circ \Art(z)$ gives a maximal ideal of $\O[Z]$. If we denote by $\Lambda_Z$ the completion of the group algebra $\O[Z]$ at this maximal ideal, then the character $\psiuniv \circ \Art(z)$ induces a homomorphism $\Lambda_Z \rightarrow \Rpsi$; the corresponding morphism of schemes $\Spec \Rpsi \rightarrow \Spec \Lambda_Z$ associates to each deformation $\psi$ of $\bar{\psi}$ the character $\psi \circ \Art$ of $Z$. Thus $\Lambda_Z$ is identified with $\Rpsi$.

By the local-global compatibility \cite[\S 4.22]{MR3529394}, the action of $\O[Z]$ on $\Minf$ extends to a continuous action of $\Lambda_Z$. Let $z: \Lambda_Z \rightarrow \O$ be an $\O$-algebra homomorphism and $\psi$ be the composition $Z \rightarrow \Lambda_Z^{\times} \xrightarrow{z} \O^{\times}$. We write $\Minfz := \Minf \otimes_{\Lambda_Z, z} \O$, which is an $\Rinfz = \Runivz \ctimes_{\O} \O \bracketx$-module, and $\Piinfz = \Homc_{\O}(\Minfz, E)$.

\begin{prop}
The module $\Minfz$ is an $\O[G]$-module with an arithmetic action of $\Rinfz$. This means that $\Minfz$ satisfies (\ref{AA1}), (\ref{AA2}) and (\ref{AA4}) with $\Rinf$ replaced by $\Rinfz$ and (\ref{AA2}) with pseudocompact $\O \bracketK$-modules replaced by pseudocompact $\O \bracketK$-modules with central character $\psi^{-1}$.
\end{prop}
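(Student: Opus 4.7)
The plan is to deduce each of the three axioms \ref{AA1}, \ref{AA2}, \ref{AA4} for $\Minfz$ from the corresponding statement for $\Minf$ by base change along $z : \Lambda_Z \to \O$. The first thing I would establish is that the $\Lambda_Z$-action on $\Minf$ (coming from the center $Z$ of $G$) agrees with the action induced from the identification $\Lambda_Z \cong \Rpsi$ composed with the determinant map $\Rpsi \to \Runiv$, so that the composite $\Lambda_Z \cong \Rpsi \to \Runiv \to \Rinf$ encodes the $\Lambda_Z$-structure on $\Minf$. Granting this, $\mathfrak{a} := \ker z$ cuts out $\Runivz = \Runiv / \mathfrak{a}\Runiv$ on the deformation ring side (by the universal property of the fixed-determinant ring), so $\Rinfz \cong \Rinf \otimes_{\Lambda_Z, z} \O$, agreeing with the definition $\Rinfz = \Runivz \ctimes_{\O} \O\bracketx$, and $\Minfz = \Minf \otimes_{\Lambda_Z, z} \O$.

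For \ref{AA1}, finite generation of $\Minfz$ over $\Rinfz \bracketK = \Rinf \bracketK \otimes_{\Lambda_Z, z} \O$ is immediate from finite generation of $\Minf$ over $\Rinf \bracketK$ by right-exactness of tensor product. For \ref{AA4}, I would verify that the Hecke map $\eta : \H(\sigma) \to \Runiv(\sigma)[1/p]$ of \cite[Theorem 4.1]{MR3529394} and \cite[Theorem 3.6]{2018arXiv180301610P} is compatible with determinants, so that the composite $\H(\sigma) \to \Runiv(\sigma) \to \Rinfz(\sigma)$ factors through $\Runivz(\sigma)$. This compatibility is built into the classical local Langlands correspondence, as the center of $\H(\sigma)$ encodes the determinant of the Weil-Deligne parameter. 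Base change of the action on $\Minf(\sigma^\circ)$ then gives the desired action on $\Minfz(\sigma^\circ) \cong \Minf(\sigma^\circ) \otimes_{\Lambda_Z, z} \O$.

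The substantive axiom is \ref{AA2}. The plan is to use the following adjunction: for any pseudocompact $\O\bracketK$-module $N$ in the subcategory with central character $\psi^{-1}$, the $Z$-action on $N$ (inherited from the compatible $G$-action) makes $\Lambda_Z$ act through the quotient $\Lambda_Z \twoheadrightarrow \O$ given by $z$. Consequently, any continuous $\O\bracketK$-linear, $Z$-equivariant map $f : \Minf \to N$ must annihilate $\mathfrak{a}\Minf$, since $f(\lambda m) = z(\lambda) f(m) = 0$ for $\lambda \in \mathfrak{a}$, and hence factors uniquely through $\Minfz$. This gives a natural bijection
\[
\Homc_{\O\bracketK}(\Minfz, N) \;\cong\; \{f \in \Homc_{\O\bracketK}(\Minf, N) : f \text{ is } Z\text{-equivariant}\}.
\]
Exactness of the right-hand side in $N$ follows from projectivity of $\Minf$ in pseudocompact $\O\bracketK$-modules combined with the fact that on the subcategory $Z$ acts through a fixed character, so the $Z$-equivariance condition is preserved in short exact sequences; this yields projectivity of $\Minfz$ in the subcategory. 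The main obstacle I anticipate is keeping the central-character conventions aligned (tracking $\psi$ versus $\psi^{-1}$ between the $Z$-action on $\Minfz$ coming from $z$ and the prescribed central character on the target subcategory), but this is bookkeeping rather than a conceptual difficulty.
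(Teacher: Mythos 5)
Your reductions of (AA1) and (AA4) to the corresponding properties of $\Minf$ by base change along $z:\Lambda_Z\to\O$ are fine and are essentially what the paper intends (its proof is a one-line reference to the argument for $\Minf$ in \cite{MR3529394}). The problem is (AA2), where your argument has a genuine gap. The test objects $N$ for projectivity are pseudocompact $\O\bracketK$-modules with central character; they carry an action of $Z(K)=K\cap Z\cong\Zp^{\times}$ through $\O\bracketK$, but \emph{no} action of the full centre $Z\cong\Qp^{\times}$ of $G$ --- in particular the uniformizer $p\in Z$ does not act on $N$. So ``$Z$-equivariant map $\Minf\to N$'' is not defined, and the adjunction you can actually write down (every $\O\bracketK$-linear map is automatically $Z(K)$-equivariant, hence kills $\mathfrak{a}_{Z(K)}\Minf$) identifies $\Homc_{\O\bracketK}(\Minf,N)$ with maps out of $\Minf/\mathfrak{a}_{Z(K)}\Minf$, not out of $\Minfz=\Minf/\mathfrak{a}\Minf$, which is the further quotient by $p-\psi(p)$.

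Correspondingly, the exactness step fails where it matters: given a surjection $N\twoheadrightarrow N''$ in the subcategory and a map $\Minfz\to N''$, projectivity of $\Minf$ in $\ModproK(\O)$ produces an $\O\bracketK$-linear lift $\Minf\to N$, but nothing forces that lift to annihilate $(p-\psi(p))\Minf$, since $p\notin K$. This is not repairable by bookkeeping: if $M$ is a projective $\O\bracketK$-module on which $p\in Z$ acts by an arbitrary commuting endomorphism $\phi$, the quotient $M/(\phi-\psi(p))M$ need not be projective (already for $K$ trivial: $\operatorname{coker}(\phi)$ of an endomorphism of a free $\O$-module need not be free). The missing input is the stronger statement that $\Minf$ is projective over $\O\llbracket KZ\rrbracket\cong\O\bracketK\,\hat{\otimes}_{\O\llbracket Z(K)\rrbracket}\Lambda_Z$ (equivalently, over $\O\bracketK\llbracket x\rrbracket$ with $x$ corresponding to $p-\psi(p)$); then $\Minfz$ is a direct summand of a product of copies of $\O\bracketK\otimes_{\O\llbracket Z(K)\rrbracket,\zeta}\O$, which is the projective generator of $\ModproKc(\O)$. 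That stronger projectivity is exactly what \cite[\S 4.22--4.26]{MR3529394} establishes, using the patching construction of $\Minf$ (each finite-level module is projective for the group generated by $K$ and $Z$); it does not follow formally from projectivity over $\O\bracketK$ together with the existence of a $\Lambda_Z$-action. You should either invoke that result or prove the $\O\llbracket KZ\rrbracket$-projectivity directly.
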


\begin{proof}
The proof is same as $\Minf$ in \cite{MR3529394}.
\end{proof}

By \cite[Corollary 4.26]{MR3529394}, $\Minfz$ lies in $\C(\O)$.  Thus we may apply Colmez's functor to $\Minfz$ and obtain an $\GQp$-module $\cV(\Minfz)$ with an action of $\Rinfz$, hence an $\Rinfz \llbracket \GQp \rrbracket$-module.

\begin{prop} \label{prop:Minffg}
$\cV(\Minfz)$ is finitely generated over $\Rinfz \bracketGQp$.
\end{prop}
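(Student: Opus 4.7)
The plan is to apply topological Nakayama for pseudocompact modules over the complete local Noetherian ring $\Rinfz$: it suffices to show that the special fibre $\cV(\Minfz) / \m_\infty \cV(\Minfz)$ is finitely generated as $k \bracketGQp$-module, where $\m_\infty \subset \Rinfz$ denotes the maximal ideal. Indeed, once a finite set of topological generators of this quotient is lifted to $\cV(\Minfz)$, the closed $\Rinfz \bracketGQp$-submodule they generate surjects onto $\cV(\Minfz)$ by \cite[Corollary 1.5]{MR0202790}.

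To identify the special fibre, I would use that $\cV$ is exact on finite-length objects of $\c(\O)$ and commutes with inverse limits by construction, hence is right exact on all of $\c(\O)$ and commutes with finite direct sums. Choosing generators $f_1, \ldots, f_d$ of $\m_\infty$, one has a right-exact sequence
\begin{equation*}
\Minfz^d \xrightarrow{(f_1, \ldots, f_d)} \Minfz \twoheadrightarrow \Minfz / \m_\infty \Minfz \to 0
\end{equation*}
in $\c(\O)$, to which applying $\cV$ yields a canonical isomorphism $\cV(\Minfz) / \m_\infty \cV(\Minfz) \cong \cV(\Minfz / \m_\infty \Minfz)$. By property (\ref{AA1}), the module $\Minfz / \m_\infty \Minfz$ is finitely generated over $k \bracketK$, so its Pontryagin dual is an admissible smooth representation of $G$ with central character $\psi^{-1}$, which is an object of $\Modlfin(k)$. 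Proposition \ref{prop:fgadm} then gives that $\cV(\Minfz / \m_\infty \Minfz)$ is finitely generated over $k \bracketGQp$, completing the argument.

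The main technical point is the right-exactness of $\cV$ on $\c(\O)$ applied to this particular quotient. This is ultimately formal: $\m_\infty \Minfz$ is the closed image of the map from the pseudocompact module $\Minfz^d$, so $\Minfz / \m_\infty \Minfz$ again lies in $\c(\O)$, and writing each term as an inverse limit of its finite-length quotients reduces the exactness to the known finite-length case, using that inverse limits of surjections between compact modules are surjective. The rest of the proof then consists only of the two ingredients already available: property (\ref{AA1}) together with Proposition \ref{prop:fgadm}, and the pseudocompact Nakayama lemma.
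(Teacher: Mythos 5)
Your argument is correct and follows essentially the same route as the paper: reduce by the topological Nakayama lemma to the special fibre, identify $\cV(\Minfz)\otimes_{\Rinfz}k$ with $\cV(\Minfz\otimes_{\Rinfz}k)$, note that the latter is the image under $\cV$ of (the dual of) an admissible representation by (\ref{AA1}), and conclude by Proposition \ref{prop:fgadm}. The only difference is that you reprove the base-change compatibility of $\cV$ by hand via right-exactness, where the paper simply cites \cite[Lemma 5.50]{MR3150248}.
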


\begin{proof}
Using Nakayama lemma for compact modules, it is enough to show that $\cV(\Minfz) \otimes_{\Rinfz} k \cong \cV(\Minfz \otimes_{\Rinfz} k)$  (See \cite[Lemma 5.50]{MR3150248}) is a finitely generated $k \bracketGQp$-module. Note that $\Minfz \otimes_{\Rinfz} k$ is a finitely generated $k \bracketK$-module by (\ref{AA1}), so its Pontryagin dual is an admissible $K$-representation with a smooth $G$-action, and thus an admissible $G$-representation. The proposition follows from Lemma \ref{prop:fgadm}.
\end{proof}

\subsection{Capture}
Let $Z(K)$ be the center of $K = \GL_2(\Zp)$, $\O \bracketK$ be the completed group algebra, and $\Modpro_K(\O)$ be the category of compact $\O \bracketK$-modules. For a continuous character $\psi: Z(K) \rightarrow \O^{\times}$, we let $\Modpro_{K, \psi}(\O)$ be the full subcategory of $\Modpro_{K}(\O)$ such that $M \in \Modpro_{K, \psi}(\O)$ lies in $\Modpro_{K, \psi}(\O)$ if and only if $Z(K)$ acts on $M$ by $\psi^{-1}$. 

\begin{definition}
Let $\{ V_i \}_{i \in I}$ be a set of continuous $K$-representations on finite-dimensional $E$-vector spaces and let $M \in \Modpro_{K, \psi}(\O)$. We say that $\{ V_i \}_{i \in I}$ captures $M$ if for any proper quotient $M \twoheadrightarrow Q$, we have $\Homc_{\O \bracketK}(M, V_i) \neq \Homc_{\O \bracketK}(Q, V_i)$ for some $i \in I$.
\end{definition}

By Corollary 4.26 of \cite{MR3529394}, $\Minf^{\psi}$ is a nonzero projective object in $\Modpro_{K, \psi}(\O)$, where $\psi = \psi \vert_{Z(K)}$.

\begin{thm} \label{thm:MinfCH}
The action of $\Rinfz \llbracket \GQp \rrbracket$ on $\cV(\Minfz)$ factors through $\Rinfz \bracketGQp / J$, where $J$ is a closed two-sided ideal generated by $g^2-\tr\big(\rinf(g)\big)g+\det\big(\rinf(g)\big)$ for all $g \in \GQp$, where $\rinf : \GQp \rightarrow \GL_2(\Rinfz)$ is the Galois representation lifting $\rbar$ induced by the natural map $\Runivz \rightarrow \Rinfz$.
\end{thm}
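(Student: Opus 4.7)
The plan is to show that for every $g \in \GQp$, the element $x_g := g^2 - \tr(\rinf(g))g + \det(\rinf(g)) \in \Rinfz \bracketGQp$ annihilates $\cV(\Minfz)$. The strategy is to verify this at a Zariski-dense set of classical specializations of $\Rinfz[1/p]$ where the $p$-adic local Langlands correspondence for $\GL_2(\Qp)$ provides an explicit identification of the Galois action, and then to pass to the global statement using the topological structure of $\cV(\Minfz)$.

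First, I would identify a suitable set $Y \subseteq \mSpec \Rinfz[1/p]$: the points $y$ such that the associated Galois representation $r_x$ is crystabelline with distinct Hodge-Tate weights and $\pism(r_x)$ generic irreducible, and such that $y$ lies in the support of some $\Minf(\sigma^{cr}(\lambda, \tau)^\circ) \otimes_{\Lambda_Z, z} \O$. Using the density of crystabelline points in the generic fiber of $\Spec \Runivz[1/p]$, property \ref{AA3} (the support of $\Minf(\sigma^\circ)$ is a union of irreducible components of $\Spec R(\sigma)$), and the factorization $\Rinfz \cong \Runivz \ctimes_\O \O\bracketx$, one sees that $Y$ is Zariski dense in the support of $\cV(\Minfz)$ in $\Spec \Rinfz$.

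For each $y \in Y$, property \ref{AA4} combined with the classical local Langlands correspondence yields an embedding $\BS(r_x) \hookrightarrow \Pi_y^{\lalg}$ (cf.\ \cite[Theorem 4.35]{MR3529394}). The $p$-adic local Langlands correspondence for $\GL_2(\Qp)$ then identifies $\cV(\Pi_y)$ with the Galois representation $r_x$ up to the normalizing twist built into $\cV$. Under the identification $\cV(\Pi_y) \cong \cV(\Minfz)_y \otimes_{\O_{\kappa(y)}} \kappa(y)$ obtained from the exactness of $\cV$ and the fact that $\Minfz \otimes_{\Rinfz, y} \O_{\kappa(y)}$ is the Schikhof dual of a unit ball in $\Pi_y$, the group $\GQp$ acts on (the generic fiber of) $\cV(\Minfz) \otimes_{\Rinfz, y} \kappa(y)$ as a $2$-dimensional representation with the trace and determinant of $\rinf$ specialized at $y$. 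The Cayley-Hamilton theorem applied to the matrix $r_x(g)$ then gives
\[
x_g \cdot \cV(\Minfz) \subseteq \m_y \cV(\Minfz) + \cV(\Minfz)[\varpi^\infty]
\]
for every $y \in Y$.

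To conclude that $x_g \cdot \cV(\Minfz) = 0$, I would combine the Zariski density of $Y$ in the support of $\cV(\Minfz)$ with the fact that $\cV(\Minfz)$ is finitely generated over $\Rinfz \bracketGQp$ (Proposition \ref{prop:Minffg}) and pseudocompact over $\Rinfz$; this should force the image of $x_g \cdot \cV(\Minfz)$ in each continuous $\Rinfz[1/p]$-linear fiber to vanish, whence the global vanishing. The main obstacle I anticipate is precisely this density-to-vanishing step: the submodule $x_g \cdot \cV(\Minfz)$ is not \emph{a priori} finitely generated over $\Rinfz$ (indeed, this finiteness is a downstream consequence of the theorem being proved), so the naive Nakayama reduction is unavailable, and one must argue topologically, using the pseudocompact structure of $\cV(\Minfz)$ and the density of $Y$ in the rigid generic fiber of $\Spec \Rinfz$.
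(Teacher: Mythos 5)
There is a genuine gap, in fact two related ones, and the paper's proof is built around a single idea (capture) that resolves both and that your proposal is missing. First, your fiber-wise step claims too much: at a crystabelline point $y$ in the support, what the correspondence actually gives you is an embedding $\widehat{\BS(r_x)} \hookrightarrow \Pi_y$, hence (by contravariance of $\cV$ on Banach representations) only a \emph{surjection} $\cV(\Pi_y) \twoheadrightarrow \cV\big(\widehat{\BS(r_x)}\big)$, and the Cayley--Hamilton relation for $r_x$ is known on that quotient because $\cV\big(\widehat{\BS(r_x)}\big)$ is a quotient of $\cV\big(B(r_x)\big) \cong r_x$. You cannot yet conclude that $x_g$ kills all of $\cV(\Pi_y)$: the identification $\cV(\Pi_y) \cong r_x$ is exactly Theorem \ref{thm:autocompirred}, whose proof uses Theorem \ref{thm:MinfCH}, so invoking it here is circular. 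Second, as you yourself note, even granting the fiber-wise vanishing there is no Nakayama-type mechanism to pass from $x_g \cdot \cV(\Minfz) \subseteq \m_y \cV(\Minfz)$ for $y$ in a dense set to $x_g \cdot \cV(\Minfz) = 0$, precisely because finiteness of $\cV(\Minfz)$ over $\Rinfz$ is a consequence of the theorem, not an input. Flagging the obstacle does not remove it.

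The paper's proof circumvents both problems by working upstream of the fibers: using the capturing family $\{\sigma_i\}$ of \cite[Proposition 2.7]{MR3544298} and the ``locally free of rank one'' part of \ref{AA3}, it shows that the natural map $\Minfz \rightarrow \prod_{i,y} \big(\widehat{\BS(r_x)}^{\circ}\big)^d$ is \emph{injective} (one checks that the quotient $M$ of $\Minfz$ by the kernel satisfies $M(\sigma_i^{\circ}) \cong \Minfz(\sigma_i^{\circ})$ for all $i$, and capture forces the kernel to vanish). Exactness of $\cV$ then embeds $\cV(\Minfz)$ into a product of modules on each of which Cayley--Hamilton holds by the quotient-of-$r_x$ argument above, and the relation is inherited by any submodule of the product. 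This replaces your density/intersection argument with an honest injection, which is the missing idea; without it, or some substitute of comparable strength, the proposal does not close.
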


\begin{proof}
In \cite[Proposition 2.7]{MR3544298}, it is shown that there is a family of $K$-representations $\{ \sigma_i \}_{i  \in I}$, where $\sigma_i$ is a  type for a Bernstein component containing a principal series representation but not a special series tensoring with an algebraic representation, which captures every projective object of $\Modpro_{K, \psi}(\O)$. We have the following commuting diagram:
\[
  \begin{tikzcd}
  &\bigoplus_{i \in I} \Hom_K(\sigma_i, \Piinfz) \otimes \sigma_i  \arrow[r] &\Piinfz \\
  &\bigoplus_{i \in I} \bigoplus_{\ y \in \mSpec \Rinfz(\sigma_i)[1/p]} \Hom_K(\sigma_i, \Piinfz[\m_y]) \otimes \sigma_i  \arrow[r, "(\star)"] \arrow[u, hookrightarrow] & \bigoplus_{i \in I} \bigoplus_{y \in \mSpec \Rinfz(\sigma_i)[1/p]} \Piinfz[\m_y] \arrow[u, hookrightarrow, "(*)"].
  \end{tikzcd}
\]
Since $\H(\sigma_i)$ acts on $\Hom_K(\sigma_i, \Piinfz)$ via $\H(\sigma_i) \rightarrow \Runivz(\sigma_i)[1/p] \rightarrow \Rinfz(\sigma_i)[1/p]$ by (\ref{AA4}), it acts on $\Hom_K(\sigma_i, \Piinfz[\m_y])$ via $\H(\sigma_i) \rightarrow \Runivz(\sigma_i)[1/p] \rightarrow \Rinfz(\sigma_i)[1/p] \xrightarrow{y} \kappa(y)$. Thus by applying the Frobenius reciprocity to $(\star)$, we obtain a map
\begin{gather*}
\Hom_G( \cInd \sigma_i \otimes_{\H(\sigma_i), y} \kappa(y), \Piinfz[\m_y]) \otimes (\cInd \sigma_i \otimes_{\H(\sigma_i), y} \kappa(y) ) \rightarrow \Piinfz[\m_y].
\end{gather*}
Since $\Piinfz[\m_y] = \Homc_G(\Minfz \otimes_{\Rinfz, y} \kappa(y), E)$, the image of this map is nonzero if and only if $y$ lies in the support of $\Minfz(\sigma)$, which implies $x$ is potentially crystalline of type $\sigma_i$ (recall $x \in \mSpec \Runivz$ is the point induced by $y$). We can also deduce from (\ref{AA3}) that the dimension of $\Hom_G( \cInd \sigma_i \otimes_{\H(\sigma_i), y} \kappa(y), \Piinfz[\m_y])$ over $\kappa(y)$ is 1 if $y$ lies in the support of $\Minfz(\sigma_i)[1/p]$ and 0 otherwise. In case it is nonzero, we have $\cInd \sigma_i \otimes_{\H(\sigma_i), y} \kappa(y) = \BS(r_x)$ by \cite[Proposition 3.3]{MR1955206}, and thus induces an injection $\widehat{\BS(r_x)} \hookrightarrow \Piinfz[\m_y] \hookrightarrow \Piinfz$ by Remark \ref{rmk:crysBanach}. Let $\widehat{\BS(r_x)}^{\circ} := \widehat{\BS(r_x)} \cap (\Minfz)^d$ be a $G$-invariant $\O$-lattice of $\widehat{\BS(r_x)}$. We define $N$ to be the kernel of the $\Rinfz$-algebra homomorphism
\begin{gather}
\Minfz \rightarrow \prod_{i \in I} \prod_{y} \big( \widehat{\BS(r_x)}^{\circ} \big)^d \label{equ:1}
\end{gather}
induced by ($*$) and $\widehat{\BS(r_x)} \hookrightarrow \Piinfz[\m_y] \hookrightarrow \Piinfz$, where $y \in \mSpec \Rinfz(\sigma_i)[1/p]$ lies in the support of $\Minfz(\sigma_i^{\circ})[1/p]$, and define $M$ by the exact sequence
\begin{gather}
0 \rightarrow N \rightarrow \Minfz \rightarrow M \rightarrow 0 \label{equ:2}
\end{gather}
in $\C(\O)$ with a compatible action of $\Rinfz$.

Tensoring (\ref{equ:2}) with $\sigma_i^{\circ}$ over $\O \bracketK$, we obtain a surjection $\Minfz(\sigma_i^{\circ}) \twoheadrightarrow M(\sigma_i^{\circ})$. By the definition of $M$, we see that $\widehat{\BS(r_x)} \hookrightarrow M^d$ if $y$ lies in the support of $\Minfz(\sigma_i^{\circ})[1/p]$ for some $i \in I$, thus $M(\sigma_i^{\circ})[1/p]$ is supported at each point of $\Rinfz(\sigma_i^{\circ})[1/p]$ at which $\Minfz(\sigma_i^{\circ})[1/p]$ is supported. Since $\Minfz(\sigma_i^{\circ})[1/p]$ is locally free of rank one over its support and $\Rinfz(\sigma_i^{\circ})$ is $p$-torsion free, we deduce that $M(\sigma_i^{\circ}) \cong \Minfz(\sigma_i^{\circ})$ for all $i$, which implies that $M = \Minfz$ by capture and thus $N=0$. Note that $\C(\O)$ is abelian and closed under products. Thus the target of (4) lies in the domain of $\cV$ and we have an injection
\begin{gather}
\cV(\Minfz) \cong \cV(M)  \hookrightarrow \prod_{i \in I} \prod_{y} \cV \Big( \big( \widehat{\BS(r_x)}^{\circ} \big)^d \Big). \label{equ:3}
\end{gather}

We claim that the action of $\Rinfz \bracketGQp$ on $\cV\big(( \widehat{\BS(r_x)}^{\circ} )^d \big)$ factors through $\O_{\kappa(x)} \bracketGQp / J_x$, where $J_x$ is the closed two-sided ideal generated by $g^2 - \tr(r_x(g)) g + \det(r_x(g))$ for all $g \in \GQp$.  Given the claim, we see that $g^2- \tr(\rinf(g))g+ \det(\rinf(g))$ acts by 0 on the right hand side of (\ref{equ:3}), and thus on $\cV(\Minfz)$. This proves the proposition.

To prove the claim, we note that $\BS(r_x)$ is the locally algebraic vectors of the unitary Banach representation $B(r_x)$ constructed in \cite{MR2642406}. Hence we have $r_x \cong \cV(B(r_x)) \twoheadrightarrow \cV(\widehat{\BS(r_x)})$, and the claim follows.
\end{proof}

\begin{corollary} \label{cor:VMfinite}
$\cV(\Minfz)$ is finitely generated over $\Rinfz$.
\end{corollary}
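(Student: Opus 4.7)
The plan is to combine the two immediately preceding results. By Proposition \ref{prop:Minffg}, $\cV(\Minfz)$ is a finitely generated $\Rinfz \bracketGQp$-module, while Theorem \ref{thm:MinfCH} shows that the action of $\Rinfz \bracketGQp$ on it factors through the quotient $\Rinfz \bracketGQp / J$. So the entire statement reduces to showing that $\Rinfz \bracketGQp / J$ is itself a finitely generated $\Rinfz$-module.

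To establish this, I would exploit that $\GQp$ is topologically finitely generated (by two elements $\sigma, \tau$ since $p>2$), together with the explicit description of $J$. In $\Rinfz \bracketGQp / J$, every element $g \in \GQp$ satisfies the quadratic relation $g^2 = \tr(\rinf(g))\, g - \det(\rinf(g))$, and polarizing the Cayley-Hamilton identity shows that for any $g, h$ the anticommutator $gh + hg$ lies in the $\Rinfz$-span of $\{1, g, h\}$. Applying these two relations to $g, h \in \{\sigma, \tau, \sigma\tau, \tau\sigma\}$ and iterating, one can rewrite any word in $\sigma$ and $\tau$ as an $\Rinfz$-linear combination of elements of the fixed finite set $\{1, \sigma, \tau, \sigma\tau, \tau\sigma\}$; the same finite set then generates $\Rinfz \bracketGQp / J$ over $\Rinfz$ after passing to the topological closure. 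More conceptually, this is an instance of the general fact from the theory of $2$-dimensional pseudorepresentations (Chenevier, Wang-Erickson) that the continuous Cayley-Hamilton quotient of a completed group algebra of a topologically finitely generated profinite group is a finitely generated module over the base.

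Combining these observations, $\cV(\Minfz)$ becomes a finitely generated module over the finitely generated $\Rinfz$-module $\Rinfz \bracketGQp / J$, and hence is finitely generated over $\Rinfz$ itself. The substantive work has already been carried out in Proposition \ref{prop:Minffg} (reducing the question, via admissibility and Pontryagin duality, to a cosocle computation) and in Theorem \ref{thm:MinfCH} (the capture argument together with the crystabelline Banach-space embeddings that force Cayley-Hamilton to hold); I expect the present corollary to be essentially a packaging step. The one place a reader might wish for more detail is the assertion that the Cayley-Hamilton quotient is finite over $\Rinfz$, which is where any residual technical work should be concentrated.
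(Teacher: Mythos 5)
Your proposal is correct and follows essentially the same route as the paper: reduce via Proposition \ref{prop:Minffg} and Theorem \ref{thm:MinfCH} to the finite generation of $\Rinfz \bracketGQp / J$ over $\Rinfz$, which the paper then obtains by observing that this quotient is a Cayley--Hamilton algebra over $\Rinfz$ with residual pseudorepresentation attached to $\rbar$ and citing Wang-Erickson (Proposition 3.6 of that reference), exactly the general fact you invoke. One small inaccuracy: $\GQp$ is not topologically generated by two elements; by Jannsen--Wingberg it requires $[\Qp:\Qp]+3=4$ topological generators for $p>2$. This is immaterial, since your polarization argument (reordering via $gh+hg\in\Rinfz\text{-span}\{1,g,h\}$ together with the quadratic relation, then taking closures) works verbatim for any finite topological generating set, yielding a finite spanning set consisting of $1$, the generators, and their pairwise products.
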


\begin{proof}
By Proposition \ref{prop:Minffg} and Theorem \ref{thm:MinfCH}, $\cV(\Minfz)$ is a finitely generated $\Rinfz \bracketGQp / J$-module, so it suffices to show that $\Rinfz \bracketGQp / J$ is finitely generated over $\Rinfz$. We note that $\Rinfz \bracketGQp / J$ is a Cayley-Hamilton algebra with residual pseudorepresentation associated to $\rbar$ in the sense of \cite[\S 2.2]{MR3831282}, hence it is finitely generated over $\Rinfz$ by \cite[Proposition 3.6]{MR3831282}.
\end{proof}

\begin{prop} \label{prop:Rinffaithful}
$\Rinfz$ acts on $\cV(\Minfz)$ faithfully.
\end{prop}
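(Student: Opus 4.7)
The plan is to show $I := \mathrm{Ann}_{\Rinfz}(\cV(\Minfz)) = 0$ by a density argument on the generic fibre, with the Emerton--Pa\v sk\=unas faithfulness of $\Minf$ as the principal input. By Corollary \ref{cor:VMfinite}, $\cV(\Minfz)$ is finitely generated over $\Rinfz$, so $V(I) = \mathrm{Supp}_{\Rinfz} \cV(\Minfz)$, and any closed point $y \in \mSpec \Rinfz[1/p]$ with $\cV(\Minfz) \otimes_{\Rinfz} \kappa(y) \neq 0$ contributes the relation $I \subset \m_y$.

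First I would identify a large collection of such $y$. Pick $y$ in the support of $\Minfz(\sigma^\circ)[1/p]$ for some crystalline type $\sigma = \sigma^{cr}(\lambda, \tau)$ with $\pism(r_x)$ an unramified principal series having distinct Satake parameters. By property \ref{AA3} and the capture argument from the proof of Theorem \ref{thm:MinfCH}, together with Remark \ref{rmk:crysBanach}, there is an embedding $\widehat{\BS(r_x)} \hookrightarrow \Pi_y$; applying the contravariant exact functor $\cV$ on $\Ban(E)$ and invoking Colmez's compatibility $\cV(\widehat{\BS(r_x)}) \cong r_x$ (as used at the end of the proof of Theorem \ref{thm:MinfCH}) yields $\cV(\Pi_y) \twoheadrightarrow r_x \neq 0$. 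The base-change isomorphism $\cV(\Minfz) \otimes_{\Rinfz} \kappa(y) \cong \cV(\Pi_y)$ for $\cV$ applied to a finitely generated module then places $y$ in $V(I)$.

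Next I would establish Zariski density of these points. The Emerton--Pa\v sk\=unas result says that $\Rinf$ acts faithfully on $\Minf$, so $\mathrm{Supp}_{\Rinfz} \Minfz = \Spec \Rinfz$; in particular, every irreducible component of $\Spec \Rinfz$ is covered by closures of automorphic components of the various $\Spec \Rinfz(\sigma)$. On each such automorphic component, Kisin's density of potentially crystalline points with generic Satake data produces a Zariski dense set of $y$'s of the required form. Since $\Rinfz[1/p]$ is Jacobson, it follows that $I$ lies in the nilradical of $\Rinfz[1/p]$.

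The main obstacle is the final step, promoting this to $I = 0$ in $\Rinfz$ itself. I would combine the $\O$-torsion-freeness of $\cV(\Minfz)$, inherited from projectivity of $\Minfz$ (property \ref{AA2}), with reducedness and $\O$-flatness of $\Rinfz$ on its automorphic locus---properties that come from the reduced $p$-torsion-free auxiliary quotients $\bar{R}^{\Box}_{\tilde v}$ and the potentially semi-stable quotients of $\Runivz$. A cleaner alternative invokes Theorem \ref{thm:MinfCH}: the action of $\Rinfz \bracketGQp$ on $\cV(\Minfz)$ factors through the Cayley--Hamilton algebra $\Rinfz \bracketGQp / J$, so $\mathrm{Ann}_{\Rinfz}(\cV(\Minfz))$ can be identified with the kernel of the map from $\Rinfz$ into the centre of this quotient, which is then controlled by the universal pseudorepresentation attached to $\rbar$.
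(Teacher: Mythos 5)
Your overall strategy --- produce a Zariski-dense set of closed points $y$ of $\Spec \Rinfz[1/p]$ at which $\cV(\Minfz) \otimes_{\Rinfz} \kappa(y) \neq 0$, and combine this with the finite generation of $\cV(\Minfz)$ over $\Rinfz$ --- is the same as the paper's, and your identification of the good points (crystabelline $y$ with $\widehat{\BS(r_x)} \hookrightarrow \Pi_y$ and $\cV(\widehat{\BS(r_x)})$ a quotient of $r_x$) also matches. Two steps, however, are gaps as written.

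First, the inference ``$\Rinf$ acts faithfully on $\Minf$, so $\mathrm{Supp}_{\Rinfz} \Minfz = \Spec \Rinfz$'' does not follow. Since $\Minf$ is not finitely generated over $\Rinf$, faithfulness is not preserved under the base change $\Minfz = \Minf \otimes_{\Lambda_Z, z} \O$: for a non-finitely-generated module one can have $M$ faithful over $R$ while $M/JM$ fails to be faithful over $R/J$ (e.g.\ $M = \Qp$ over $R = \Zp$ with $J = (p)$). The paper circumvents exactly this point by going in the other direction: it forms $M := \Minfz \ctimes_{\O} \Lambda_Z$, observes that $M$ carries an arithmetic action of $\Rinf \cong \Rinfz \ctimes_{\O} \Lambda_Z$, and applies \cite[Theorem 6.3]{2018arXiv180906598E} to $M$ itself (using that $\Spec \Runiv$ is irreducible and reduced); faithfulness of $M$ over $\Rinf$ then descends to faithfulness of $\Minfz$ over $\Rinfz$ through the completed tensor decomposition. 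You need this detour, or a fixed-determinant version of the Emerton--Pa\v sk\=unas theorem, before you can assert density of your good points in all of $\Spec \Rinfz$.

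Second, your closing step is not resolved. The correct passage from ``$I[1/p]$ lies in the nilradical of $\Rinfz[1/p]$'' to $I = 0$ is: $\cV(\Minfz)$ is $\O$-torsion-free (by property \ref{AA2} and exactness of $\cV$), so $I$ is saturated with respect to $\varpi$; and $\Rinfz$ is reduced and $\O$-torsion-free --- this is precisely where the paper's appeal to the irreducibility and reducedness of $\Spec \Runiv$ (via \cite[Corollary B.5]{MR3150248} and \cite{MR2642411}) enters. Reducedness ``on the automorphic locus'' is not sufficient, since a priori $I$ could live on a non-reduced or non-automorphic piece of $\Spec \Rinfz$. Your proposed ``cleaner alternative'' via Theorem \ref{thm:MinfCH} does not work: $\mathrm{Ann}_{\Rinfz}\big(\cV(\Minfz)\big)$ is not the kernel of $\Rinfz \rightarrow \Rinfz \bracketGQp / J$, because a nonzero module over a Cayley--Hamilton algebra need not be faithful over it, so the universal pseudorepresentation does not by itself control the annihilator.
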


\begin{proof}
Identify $\Lambda_Z$ with the universal deformation ring of the trivial character, we obtain an isomorphism $\Runiv \cong \Runivz \ctimes_{\O} \Lambda_Z$ via $(r, \chi) \mapsto r \otimes \chi^{1/2}$ \cite[\S 6.1]{MR3732208}, where $\chi^{1/2}$ is a square root of $\chi$ lifting $\1$, and thus an isomorphism $\Rinf \cong \Rinfz \ctimes_{\O} \Lambda_Z$. Consider the $\Rinf$-module $M:= \Minfz \ctimes_{\O} \Lambda_Z$. Note that $M$ carries an arithmetic action of $\Rinf$ by the proof of \cite[Proposition 6.10, Proposition 6.14, Proposition 6.17]{MR3732208}. By applying \cite[Theorem 6.3]{MR4077579} to $M$, we see that $\Rinf$ acts faithfully on $M$ since $\Spec \Runiv$ is irreducible and reduced (formally smooth except for $\rbar \sim (\begin{smallmatrix} 
\1 & *    \\ 0 & \omega 
\end{smallmatrix} 
) \otimes \chi$ or $p=3$ and $\rbar \vert_{I_{\Qp}} \sim (\begin{smallmatrix} 
\omega_2^2 & *    \\ 0 & \omega_2^6 
\end{smallmatrix}
) \otimes \chi$, where $\omega_2$ is a fundamental character of level 2; see \cite[Corollary B.5]{MR3150248} and \cite[Theorem 4.2, Theorem 5.2]{MR2642411}).

Consider $V := \cV(\Minfz) \ctimes_{\O} \1^{univ}$. Note that each point $y \in \Spec \Rinf$ gives a pair $(w, z)$ with $w \in \Spec \Runivz$ and $z \in \Spec \Lambda_Z$ via $\Rinf \cong \Rinfz \ctimes_{\O} \Lambda_Z$, which satisfies 
\[M \otimes_{\Rinf, y} \kappa(y) \cong \big(\Minfz \otimes_{\Rinfz, w} \kappa(w) \big) \otimes \psi_z.
\] 
For $y \in \mSpec \Rinf[1/p]$ crystabelline with a principal series type, we have $\widehat{\BS(r_x)} \hookrightarrow \Pi_y$ by Remark \ref{rmk:crysBanach} and thus $0 \neq \cV(\widehat{\BS(r_x)}) \hookrightarrow \cV(\Pi_y) \neq 0$. Since such points are dense in the support of $M$ \cite[Theorem 5.1]{MR4077579} and $M$ is faithful over $\Rinf$, we deduce that $V$ is faithful as $\Rinf$-module. This implies that $\cV(\Minfz)$ is faithful as $\Rinfz$-module and the proposition follows.
\end{proof}

\begin{corollary} \label{cor:nonvanishing}
For all $y \in \mSpec \Rinf[1/p]$, we have $\cV(\Pi_y) \neq 0$. In particular, $\Pi_y \neq 0$.
\end{corollary}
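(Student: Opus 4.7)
The plan is to reduce the non-vanishing of $\Pi_y$ to the non-vanishing of a specialization of $\Minfz$ at a point $w \in \mSpec \Rinfz[1/p]$ obtained from $y$ by separating off the central character, and then to extract this non-vanishing from the finite generation and faithfulness of $\cV(\Minfz)$ over $\Rinfz$ already established. Concretely, given $y$, let $\psi$ denote the character $Z \to \Lambda_Z^\times \hookrightarrow \Rinf^\times \xrightarrow{y} \kappa(y)^\times$ coming from the central character of $\Pi_y$, and let $z : \Lambda_Z \to \O_{\kappa(y)}$ be the corresponding homomorphism. After enlarging $E$ if necessary so that $z$ is $\O$-valued, $y$ factors through the quotient $\Rinf \twoheadrightarrow \Rinfz$ and determines a point $w \in \mSpec \Rinfz[1/p]$ with
\[
\Minf \otimes_{\Rinf, y} \kappa(y) \;\cong\; \Minfz \otimes_{\Rinfz, w} \kappa(w).
\]
Since $\Pi_y$ is the Pontryagin dual of the left-hand side (viewed as a compact $\O_{\kappa(y)}$-module), it suffices to show that the right-hand side is nonzero.

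By Corollary \ref{cor:VMfinite} and Proposition \ref{prop:Rinffaithful}, $\cV(\Minfz)$ is a finitely generated faithful $\Rinfz$-module. Since $\Rinfz$ is commutative and Noetherian, a finitely generated faithful module has support equal to all of $\Spec \Rinfz$; in particular, for the point $w$ produced above, Nakayama's lemma gives
\[
\cV(\Minfz) \otimes_{\Rinfz, w} \kappa(w) \;\neq\; 0.
\]
Using the compatibility of $\cV$ with the formation of quotients (the same principle, via \cite[Lemma~5.50]{MR3150248}, that was invoked in the proof of Proposition \ref{prop:Minffg}), this tensor product identifies with $\cV\bigl(\Minfz \otimes_{\Rinfz, w} \kappa(w)\bigr)$. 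Since $\cV$ sends the zero module to zero, we conclude that $\Minfz \otimes_{\Rinfz, w} \kappa(w) \neq 0$, and combining this with the reduction above gives $\Pi_y \neq 0$.

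The only nontrivial step is justifying the base change compatibility of $\cV$ at a closed point of the generic fibre of $\Spec \Rinfz$, rather than merely modulo the maximal ideal as used in the proof of Proposition \ref{prop:Minffg}. However, the finite generation of $\cV(\Minfz)$ over $\Rinfz$ given by Corollary \ref{cor:VMfinite} reduces this to the exactness of $\cV$ on finite length objects of $\c(\O)$ and its compatibility with projective limits, both of which are built into its definition. With this compatibility in hand, the rest of the argument is formal.
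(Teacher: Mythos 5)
Your proposal is correct and follows essentially the same route as the paper: both arguments pass to the fixed-central-character quotient $\Minfz$, use Corollary \ref{cor:VMfinite} and Proposition \ref{prop:Rinffaithful} together with Nakayama's lemma to see that $\cV(\Minfz)\otimes_{\Rinfz,w}\kappa(w)\neq 0$, and then identify this with $\cV$ of the fibre of $\Minfz$ to conclude $\Pi_y\neq 0$. Your write-up is just slightly more explicit than the paper about the passage from $y\in\mSpec\Rinf[1/p]$ to the point $w\in\mSpec\Rinfz[1/p]$ and about the base-change compatibility of $\cV$.
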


\begin{proof}
Let $\psi \varepsilon$ be the character given by $\det r_x$. Since $\Rinfz$ acts on $\cV(\Minfz)$ faithfully by Proposition \ref{prop:Rinffaithful} and $\cV(\Minfz)$ is finitely generated over $\Rinfz$ by Corollary \ref{cor:VMfinite}, Nakayama's lemma implies that $\cV(\Minfz) \otimes_{\Rinfz, y} \kappa(y) \neq 0$. On the other hand, since $\cV(\Pi_y) \cong \cV(\Minfz \otimes_{\Rinfz, y} \kappa(y)) \cong \cV(\Minfz) \otimes_{\Rinfz, y} \kappa(y)$, it follows that $\cV(\Pi_y) \neq 0$.
\end{proof}
\section{Main results} \label{Sectiuon:main}
\begin{thm} \label{thm:irrauto}
For $y \in \mSpec \Rinf[1/p]$ whose associated Galois representation $r_x$ is absolutely irreducible, we have $\cV(\Pi_y) \cong r_x^{\oplus n_y}$ for some integer $n_y \geq 1$. In particular, $\Minf(\sigma^{\circ})[1/p]$ is supported on every non-ordinary (at $\p$) component of $\Rinf(\sigma)[1/p]$ for each locally algebraic type $\sigma$ for $G$.
\end{thm}

\begin{proof}
Let $x \in \mSpec \Runiv[1/p]$ be the image of $y \in \mSpec \Rinf[1/p]$ and let $r_x: \GQp \rightarrow \GL_2(\kappa(y))$ be the corresponding Galois representation. If $r_x$ is absolutely irreducible, then the action of $\Rinf \bracketGQp \otimes_{\Rinf, y} \kappa(y)$ on $\cV(\Pi_y)$ factors through $g^2- \tr(r_x(g))g+ \det(r_x(g))$ by Theorem \ref{thm:MinfCH}, which implies $\cV(\Pi_y) \cong (r_x)^{\oplus n_y}$ (see \cite[Theorem 1]{MR1094193}) for some integer $n_y$. As $\cV(\Pi_y) \neq 0$ by Corollary \ref{cor:nonvanishing}, we get $n_y \geq 1$, which proves the first assertion.

Suppose furthermore that $r_x$ is potentially semi-stable of type $\sigma$ and $\pism(r_x)$ is generic. Since $\Pi_y$ is an admissible Banach space representation of $G$, it contains an irreducible subrepresentation $\Pi$ by \cite[Lemma 5.8]{MR2608966}, which can be assumed to be absolutely irreducible after extending scalars. By our assumptions and Lemma \ref{Lemma:SL2inv} below, we have $\Pi^{\SL_2(\Qp)} = 0$. Thus $\cV(\Pi)$ is nonzero and $\dim_E \cV(\Pi) \leq 2$ by \cite[Corollary 1.7]{MR3272011}. Since $(r_x)^{\oplus n_y} \cong \cV(\Pi_y) \twoheadrightarrow \cV(\Pi)$, we deduce $\cV(\Pi) \cong r_x$. Moreover, it follows from \cite[Theorem \uppercase\expandafter{\romannumeral6}.6.50]{MR2642409} and \cite[Theorem 1.3]{MR3272011} that we have $\Pi^{\lalg} \cong \pi_{\lalg}(r_x)$, which implies $y$ lies in the support of $\Minf(\sigma^{\circ})$ since
\[
1 = \dim_{E} \Hom_K(\sigma, \Pi) \leq \dim_E \Hom_K(\sigma, \Pi_y).
\]
Thus the second assertion is a consequence of the fact that a potentially semi-stable deformation ring is equidimensional after inverting $p$ and its non-generic locus (i.e. $\pism(r_x)$ non-generic) has positive codimension \cite[Theorem 1.2.7]{MR3546966}.
\end{proof}

\begin{lemma} \label{Lemma:SL2inv}
Let $y \in \mSpec \Rinf[1/p]$ and $\Pi$ be an absolutely irreducible subrepresentation of $\Pi_y$. If there exists no character $\chi: G_{\Qp} \rightarrow 1 + \varpi \O$, such that $r_x \otimes \chi$ is potentially crystalline of Hodge type $(0,1)$, inertial type $\eta \oplus \eta$ and $\pism(r_x \otimes \chi)$ is non-generic, then $\Pi^{\SL_2(\Qp)} = 0$.
\end{lemma}

\begin{proof}
Suppose $\Pi^{\SL_2(\Qp)} \neq 0$. Then we have $\Pi = \Pi^{\SL_2(\Qp)} \cong \Psi \circ \det$ for some continuous character $\Psi: \Qp^{\times} \rightarrow \kappa(y)^{\times}$ since $\Pi$ is absolutely irreducible. Write $\det r_x = \psi \varepsilon$. Then by identifying $\psi$ with character of $\Qp^\times$ via $\Art$, we see that $\psi = \Psi^2$. Since $p>2$, we may find a character $\chi: \Qp^{\times} \rightarrow 1 + \varpi \O$ such that $\chi^2 \psi |_{1 + p \Zp} = 1$ by Hensel's lemma.

Let $\eta: \Qp^{\times} \rightarrow \kappa(y)^{\times}$ be the smooth character defined by $x \mapsto \Psi(x)\chi(x)$. Then $\chi^2 \det r_x = \varepsilon (\chi^2 \psi) = \varepsilon \eta^2$ is de Rham of Hodge-Tate weight $1$ and we have an injection
\begin{align*}
0 \neq \Hom_K(\eta \circ \det, \Pi \otimes \chi \circ \det) \hookrightarrow \Hom_K(\eta \circ \det, \Pi_y \otimes \chi \circ \det).
\end{align*}
Since $M_{\chi} := \Minf \otimes \chi \circ \det$ carries an arithmetic action of $\Rinf$ (with $G$-action twisted by $\chi \circ \det$), we see that $y$ lies in the support of $M_{\chi}(\eta \circ \det)$ and thus $r_x \otimes \chi$ is potentially crystalline of Hodge type $\lambda= (1, 0)$ and inertia type $\tau = \eta \oplus \eta$. 

Suppose further that $\pism(r_x \otimes \chi)$ is generic. Then $\pism(r_x \otimes \chi)$ is an irreducible principal series and $\Pi' := \widehat{\BS(r_x \otimes \chi)}$ is a subrepresentation of $\Pi_y \otimes \chi$ \cite[Theorem 5.3]{MR3529394}. Note that $\Pi \otimes \chi \circ \det$ is not a subrepresentation of $\Pi'$ since $\Pi'$ is absolutely irreducible and not a character (see \cite[Theorem 4.3.1]{MR2642406} for $r_x \otimes \chi$ absolutely irreducible and \cite[Proposition 2.2.1]{MR2667890} for $r_x \otimes \chi$ reducible). Thus we have an injection
\begin{align*}
0 \neq \Hom_K(\eta \circ \det, \Pi \otimes \chi \circ \det \oplus \Pi') \hookrightarrow \Hom_K(\eta \circ \det, \Pi_y \otimes \chi \circ \det).
\end{align*}
This implies that $M_{\chi}(\eta \circ \det) \otimes_{\Rinf} \kappa(y)$ has dimension greater or equal to 2, which contradicts \ref{AA3} since $M_{\chi}(\eta \circ \det)[1/p]$ is locally free of rank one over $\Rinf(\eta \circ \det)[1/p]$. Therefore, $\pism(r_x \otimes \chi)$ is non-generic and the lemma is proved.
\end{proof}

\begin{corollary}
Let $y \in \mSpec \Rinf[1/p]$. If $r_x$ is absolutely irreducible and potentially semi-stable with $\pism(r_x)$ generic, then we have $\cV(\Pi_y) \cong r_x$. In particular, $\cV(\Pi_y) \cong r_x$ in an open dense subset of $\mSpec \Rinf[1/p]$.
\end{corollary}

\begin{proof}
Let $\Pi$ be an absolutely irreducible subrepresentation of $\Pi_y$ (after extending scalars). Since $\Pi^{\SL_2(\Qp)} = 0$ by Lemma \ref{Lemma:SL2inv}, we see that $\cV(\Pi) \cong r_x$. Let $\Pi^{\circ}_y$ and $\Pi^{\circ}$ be $G$-invariant $\O$-lattices of $\Pi_y$ and $\Pi$ respectively. Then we have
\begin{align*}
    \Homc_G(\Pi, \Pi_y) 
    &= \Hom_{\C(\O)}((\Pi^{\circ}_y)^d, (\Pi^{\circ})^d) \otimes_{\O} E \\
    &= \Hom_{\D(\O)}(\cT (\Pi^{\circ}_y)^d, \cT (\Pi^{\circ})^d) \otimes_{\O} E \\
    &= \Hom_{\GQp}(\cV(\Pi_y), \cV(\Pi)) \\
    &= \Hom_{\GQp}(r_x^{\oplus n_y}, r_x),
\end{align*}
where the second equality follows from \cite[Lemma 10.26]{MR3150248} and the fact $\Pi_y^{\SL_2(\Qp)} = \Pi^{\SL_2(\Qp)} = 0$ and the third equality follows from Proposition \ref{prop:equivB} and \cite[Theorem 3.14 (i)]{MR3272011}. This implies that $\dim_E \Homc_G(\Pi, \Pi_y) = n_y$ and we have an inclusion $\Pi^{\oplus n_y} \subset \Pi_y$. Therefore, the first assertion follows immediately from
\[
1 = \dim_E \Hom_K(\sigma, \Pi_y) \geq \dim_E \Hom_K(\sigma, \Pi^{\oplus n_y}) = n_y \cdot \dim_E \Hom_K(\sigma, \Pi) = n_y,
\]
where the first equality is due to \ref{AA3} since $y$ lies in the regular locus of $\Rinf(\sigma)$ by our assumption that $\pism(r_x)$ is generic \cite[Theorem 1.2.7]{MR3546966}).

To show the second assertion, note that the set of crystabelline points with a fixed Hodge type $\lambda$ are dense in the support of $\Minf$ \cite[Theorem 5.3]{MR4077579} and the reducible locus of $\Spec \Runiv$ is a subset of positive codimension. Thus $\cV(\Pi_y) \cong r_x$ in a dense subset of $\Spec \Rinf[1/p]$ by the first assertion. Using the fact that $y \mapsto \dim_{\kappa(y)} \cV(\Pi_y)$ is an upper semicontinuous function, we deduce that $\cV(\Pi_y) \cong r_x$ in an open dense subset of $\mSpec \Rinf[1/p]$.
\end{proof}

\begin{prop} \label{prop:autocomp}
For $\sigma = \sigma(\lambda, \tau)$ or $\sigma^{cr}(\lambda, \tau)$, every irreducible component of $\Runiv(\sigma)[1/p]$ is automorphic, except possibly for the semi-stable non-crystalline component in the case $\rbar \sim ( \begin{smallmatrix} 
\omega & * \\ 0 & \mathbbm{\1} 
\end{smallmatrix} ) \otimes \chi$, $\lambda=(a+1,a)$ and $\tau = \eta \oplus \eta$.
\end{prop}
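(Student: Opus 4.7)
The plan is to partition the irreducible components of $\Spec \Runiv(\sigma)[1/p]$ into two classes and treat each separately. Call a component \emph{non-ordinary} if its generic point $x$ corresponds to an absolutely irreducible Galois representation $r_x$, and \emph{ordinary} otherwise; the latter case forces $r_x$ to be ordinary, since $r_x$ is reducible and potentially semi-stable with distinct Hodge--Tate weights.

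For non-ordinary components, there is nothing new to prove: Theorem \ref{thm:autocompirred} asserts directly that $\Minf(\sigma^\circ)[1/p]$ is supported on every non-ordinary component of $\Spec \Runiv(\sigma)$, so each such component is automorphic by definition.

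For the ordinary components I would invoke the automorphy results for ordinary deformation rings established by Kisin in \cite{MR3072811} via ordinary Hida families. The ordinary locus of $\Runiv(\sigma)[1/p]$ consists of a potentially crystalline ordinary component, plus, in the special situation where $\tau = \eta \oplus \eta$ is scalar and $\lambda = (a+1,a)$ is compatible with a Steinberg representation, an additional semi-stable non-crystalline component of Steinberg type. The crystalline ordinary components admit a classical modular lift by \cite{MR3072811}, hence lie in the support of $\Minf(\sigma^\circ)[1/p]$ via the patching formalism, and are therefore automorphic. The Steinberg-type semi-stable non-crystalline component can likewise be reached by an ordinary modularity argument, except when the two characters on the diagonal of the ordinary lift of $\rbar$ differ by $\omega$, i.e.\ when $\rbar \sim \bigl(\begin{smallmatrix}\omega & * \\ 0 & \1\end{smallmatrix}\bigr)\otimes \chi$; the arguments of \cite{MR3072811} are inapplicable in this boundary case.

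The main obstacle is therefore the Steinberg-type semi-stable non-crystalline component in the excluded case $\rbar\sim \bigl(\begin{smallmatrix}\omega & *\\ 0 & \1\end{smallmatrix}\bigr)\otimes \chi$ with $\lambda=(a+1,a)$ and $\tau=\eta\oplus\eta$, which is why the proposition explicitly leaves this case open. As foreshadowed in the introduction, this exceptional component will be addressed separately by a direct Hilbert--Samuel multiplicity computation rather than by the global patching argument used here.
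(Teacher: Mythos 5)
Your overall strategy coincides with the paper's: dispose of the components with irreducible generic point by Theorem \ref{thm:autocompirred}, and reach the reducible components by producing a global automorphic point on each via \cite{MR3072811}. However, there is a genuine gap in the middle step. You assert without justification that the reducible locus of $\Runiv(\sigma)[1/p]$ consists of potentially crystalline ordinary components together with, at most, one Steinberg-type semi-stable non-crystalline component occurring only for $\lambda=(a+1,a)$ and scalar $\tau$. This classification is precisely what has to be proved, and the paper does so by an explicit Bloch--Kato computation: writing a reducible de Rham point as an extension of de Rham characters $\chi_2$ by $\chi_1$ and setting $V=\chi_1\chi_2^{-1}=\varepsilon^k\mu$, one computes $h^1_f(V)$ and $h^1_g(V)$ via \cite[Proposition 1.24]{MR1263527} and finds $h^1_g(V)=h^1_f(V)$ except when $(k,\mu)=(-1,\1)$. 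This is what shows that every reducible de Rham point outside the semi-stable non-crystalline locus is potentially crystalline ordinary, and simultaneously pins down exactly when the exceptional component exists (forcing $\rbar\sim(\begin{smallmatrix}\omega & * \\ 0 & \1\end{smallmatrix})\otimes\chi$). Without some such argument your case division is not established.

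Your explanation of why the exceptional component is excluded is also off the mark. The relevant input from \cite{MR3072811} is Theorem A.4.1, which requires the local representation to be \emph{potentially diagonalizable}; potentially crystalline ordinary representations are potentially diagonalizable, which is why all the other reducible components are covered regardless of the shape of $\rbar$ (in particular the crystalline ordinary components with $\rbar\sim(\begin{smallmatrix}\omega & * \\ 0 & \1\end{smallmatrix})\otimes\chi$ are fine). What fails on the semi-stable non-crystalline component is potential crystallinity itself, hence the known route to potential diagonalizability --- not, as you suggest, a residual congruence condition on the diagonal characters obstructing an ``ordinary Hida family'' argument. You should replace that heuristic with the correct mechanism, and supply the $h^1_f$ versus $h^1_g$ computation (or an equivalent argument) to justify the structure of the reducible locus.
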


\begin{proof}
By Theorem \ref{thm:irrauto}, we only have to handle components in the reducible locus. If $y \in \Spec \Rinfz(\sigma)[1/p]$ is reducible, then $r_x$ is a de Rham representation which is an extension of de Rham characters $\chi_2$ by $\chi_1$. Write $V = \chi_1 \chi^{-1}_2$, which is a de Rham character, hence of the form $\varepsilon^k \mu$, where $k$ is an integer and $\mu$ is a character such that $\mu \vert_{\IQp}$ has a finite image. By \cite[Proposition 1.24]{MR1263527}, we may compute the dimension of crystalline (resp. de Rham) extensions of $\Qp$ by $V$ using the following formulas:
\begin{align*}
h^1_f(V) 
&= h^0(V) + \dim_{\Qp} \DdR(V)/\Fil^0 \DdR(V)\\
&= \begin{cases}
	 1&  \text{if } k < 0 \text{ or }  (k, \mu) = (0, \mathbbm{1})\\
	 0&  \text{otherwise}
   \end{cases} \\
h^1_g(V) 
&= h^1_f(V)  +  \dim_{\Qp}\Dcris(V^*(1))^{\varphi=1} \\
&= \begin{cases}
	 2&  \text{if } (k, \mu) = (-1, \mathbbm{1}) \\
	 1&  \text{if } k < 0, \ (k, \mu) \neq (-1, \mathbbm{1}) \text{ or }  (k, \mu) = (0, \mathbbm{1})\\
	 0&  \text{otherwise}
   \end{cases}
\end{align*}

This shows that for all such $y$, $r_x$ is potentially crystalline ordinary (and thus potentially diagonalizable) in the sense of \cite[\S 1.4]{MR3152941} except the case specified. By applying Theorem A.4.1 of \cite{MR3072811} (see also \cite[Corollary 5.4]{MR3529394}), we may construct a global automorphic Galois representation corresponding to a point on the same component as $y$. This implies that all such components are automorphic, which completes the proof.
\end{proof}

\begin{rmk} \label{rmk:pstncry}
In the case $\rbar \sim ( \begin{smallmatrix} 
\omega & *    \\ 0 & \mathbbm{\1} 
\end{smallmatrix} ) \otimes \chi$, $\lambda=(a+1,a)$ and $\tau = \eta \oplus \eta$, then $\cZ\big(\Runiv(\sigma(\lambda, \tau)) / \omega \big)$ is irreducible if $\rbar$ is tr\'es ramifi\'e, and is the sum of two irreducible components if $\rbar$ is peu ramifi\'e (include split)  with one of which is $\cZ\big(\Runiv(\sigma^{cr}(\lambda, \tau)) / \omega \big)$ and the other of which is the closure of the semi-stable non-crystalline points; see Proposition 3.3.1 of \cite{MR3134019}.
\end{rmk}

\begin{proof}[Proof of Theorem \ref{thm:Breuil-Mezard}]
We follow the strategy in \cite{MR3134019} and \cite{MR3306557}. For every smooth irreducible $k$-representation $\bsigma$ of $K$, we define $\C_{\bsigma}(\rbar)$ to be the cycle $\cZ_{4}\big(\Runiv(\tilde{\sigma}) / \varpi \big)$, where $\tilde{\sigma}$ is a lift of $\bsigma$ of the form $\sigma^{cr}(\lambda, \1 \oplus \1)$ (see Remark \ref{rmk:cryslift}). Assuming $\sigma$ as in Proposition \ref{prop:autocomp}, we have the following equality of cycles:
\begin{align*}
\cZ \big(\Runiv(\sigma) / \varpi \big) \times \cZ(k \bracketx)
&= \cZ \big(\Rinf(\sigma) / \varpi \big) \\
&= \cZ \big(\Minf(\sigma^{\circ})/ \varpi \big) \\
&= \sum_{\bsigma} m_{\bsigma}(\lambda, \tau) \cZ \big(\Minf(\bsigma)\big) \\
&= \sum_{\bsigma} m_{\bsigma}(\lambda, \tau) \cZ \big(\Rinf(\tilde{\sigma}) / \varpi \big) \\
&= \sum_{\bsigma} m_{\bsigma}(\lambda, \tau) \C_{\bsigma}(\rbar) \times \cZ(k \bracketx),
\end{align*}
where $\Minf(\bsigma)$ is a $\Rinf(\tilde{\sigma}) / \varpi$-module defined by $\Homc_{\O \bracketK}( \Minf, \bsigma^\vee)^\vee \cong \Minf(\tilde{\sigma}) / \varpi \Minf(\tilde{\sigma})$. Note that the first and the last equalities follow from the definition of $\Rinf(\sigma)$, the second and fourth equalities follows from Proposition \ref{prop:autocomp} and \cite[Lemma 5.5.1]{MR3134019}, and the third equality follows from the projectivity of $\Minf$; see \cite[Lemma 2.3]{MR3306557}. This proves the theorem but the exceptional case.

For the exceptional case (see Remark \ref{rmk:pstncry}), we may check the conjecture by comparing this cycle with $\cZ\big(\Runiv(\tilde{\sigma}) / \omega \big)$ with $\bar{\sigma} = \overline{\tilde{st} \otimes \eta}$ directly; see \cite[Theorem 5.3.1]{MR1944572} and \cite[\S 3.2.7]{MR2551764}.
\end{proof}

\begin{corollary}
Every irreducible component of $\Runiv(\sigma)[1/p]$ is automorphic.
\end{corollary}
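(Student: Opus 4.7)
The plan is to use Theorem \ref{thm:Breuil-Mezard} (Breuil--M\'ezard, now established for the exceptional case by direct Hilbert--Samuel computation) to upgrade Proposition \ref{prop:autocomp}, which already handles everything except the semi-stable non-crystalline component in the case $\rbar \sim (\begin{smallmatrix} \omega & * \\ 0 & \mathbbm{1} \end{smallmatrix}) \otimes \chi$, $\lambda=(a+1,a)$, $\tau = \eta \oplus \eta$. So I only need to show that in this last remaining situation the semi-stable non-crystalline component of $\Runiv(\sigma(\lambda,\tau))[1/p]$ lies in the support of $\Minf(\sigma(\lambda,\tau)^\circ)$.

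The strategy is to compare two expressions for the cycle $\Z(\Minf(\sigma^\circ)/\varpi)$. On one hand, the projectivity of $\Minf$ over $\O\bracketK$ (property \ref{AA2}) and the standard d\'evissage (as in \cite[Lemma 2.3]{MR3306557}) yield
\begin{equation*}
\Z \big(\Minf(\sigma^\circ)/ \varpi \big) = \sum_{\bsigma} m_{\bsigma}(\lambda, \tau)\, \Z \big(\Minf(\bsigma)\big),
\end{equation*}
and for each irreducible $\bsigma$ the cycle $\Z(\Minf(\bsigma))$ is computed through a crystalline lift $\tilde{\sigma} = \sigma^{cr}(\lambda_{\bsigma},\1 \oplus \1)$ of $\bsigma$ (as in Remark \ref{rmk:cryslift}). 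Since $\tilde{\sigma}$ is crystalline, Proposition \ref{prop:autocomp} applies without exception, giving $\Z(\Minf(\tilde{\sigma})/\varpi) = \Z(\Rinf(\tilde{\sigma})/\varpi) = \C_{\bsigma}(\rbar) \times \Z(k\bracketx)$. On the other hand, Theorem \ref{thm:Breuil-Mezard} (which has already been proved in all cases, including the exceptional one, by the direct multiplicity computation of \cite{MR1944572, MR2551764}) gives
\begin{equation*}
\Z \big(\Runiv(\sigma)/ \varpi \big) = \sum_{\bsigma} m_{\bsigma}(\lambda, \tau)\, \C_{\bsigma}(\rbar).
\end{equation*}

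Combining these two displays, and multiplying the second by $\Z(k\bracketx)$, I obtain
\begin{equation*}
\Z \big(\Minf(\sigma^\circ)/\varpi \big) = \Z \big(\Runiv(\sigma)/\varpi \big) \times \Z(k\bracketx) = \Z \big(\Rinf(\sigma)/\varpi \big).
\end{equation*}
Since $\Minf(\sigma^\circ)$ is a finitely generated $\Rinf(\sigma)$-module by \ref{AA3}, its support is a closed subset of $\Spec \Rinf(\sigma)$; the equality of top-dimensional cycles then forces every irreducible component of $\Spec \Rinf(\sigma)$ (equivalently, of $\Spec \Runiv(\sigma)$) to appear in the support of $\Minf(\sigma^\circ)$. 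This is exactly the assertion that every irreducible component of $\Runiv(\sigma)[1/p]$ is automorphic, in particular the semi-stable non-crystalline component in the exceptional case.

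The potential obstacle I must be careful about is circularity: the proof of Theorem \ref{thm:Breuil-Mezard} uses Proposition \ref{prop:autocomp} to identify $\Z(\Rinf(\sigma)/\varpi)$ with $\Z(\Minf(\sigma^\circ)/\varpi)$ in the non-exceptional cases, but for the exceptional case Theorem \ref{thm:Breuil-Mezard} is established independently by a direct cycle computation (see Remark \ref{rmk:pstncry}). The argument above then uses this independently proved cycle identity to reverse the implication and deduce automorphy. The only input from Proposition \ref{prop:autocomp} in the argument above is its application to the crystalline types $\tilde{\sigma}$, where it unconditionally holds, so no circularity arises.
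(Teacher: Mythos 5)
Your argument is correct and is essentially the paper's: the proof in the text simply cites Theorem 5.5.2 of \cite{MR3134019}, whose content is exactly the cycle comparison you carry out, and your non-circularity remark (that the exceptional case of Theorem \ref{thm:Breuil-Mezard} is settled by a direct multiplicity computation, while Proposition \ref{prop:autocomp} is only invoked for the crystalline lifts $\tilde{\sigma}$) is the right justification for reversing the implication. The one step worth tightening is the last: passing from equality of mod-$\varpi$ cycles to equality of supports requires that reduction of cycles commutes with $\Z(-)$ for $\O$-flat modules (Lemma 5.5.1 of \cite{MR3134019}) together with the generic rank-one statement in \ref{AA3}, since a priori the special fibers of distinct characteristic-zero components could share top-dimensional irreducible pieces, so one must argue that the missing components would contribute a nonzero effective cycle rather than merely that each component meets the support.
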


\begin{proof}
Given Theorem \ref{thm:Breuil-Mezard}, this follows from Theorem 5.5.2 of \cite{MR3134019}.
\end{proof}

Following from the method of \cite[Theorem 2.2.18]{MR2505297} and \cite[Theorem 6.3]{MR3429471}, we obtain the Fontaine-Mazur conjecture below.

\begin{thm}
Let $p$ be an odd prime and let $F$ be a totally real field in which $p$ splits completely. Let $\rho : G_{F, S} \rightarrow \GL_2(\O)$ be a continuous representation. Suppose that
\begin{enumerate}
\item $\bar{\rho}$ is modular (thus $\rho$ and $\bar{\rho}$ are odd).
\item $\bar{\rho} \vert_{F(\zeta_p)}$ is absolutely irreducible.
\item $\rho \vert_{G_{F_v}}$ is potentially semi-stable with distinct Hodge-Tate weights for each $v | p$.
\end{enumerate}
Then (up to twist) $\rho$ comes from a Hilbert modular form.
\end{thm}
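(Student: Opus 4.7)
The plan is to deduce the theorem directly from the Hilbert-modular modularity lifting theorem of Gee--Kisin \cite[Theorem 6.3]{MR3429471}, using as new local input the Breuil--M\'ezard conjecture in its complete form as established in Theorem \ref{thm:Breuil-Mezard} and the corollary immediately following it. Roughly, that modularity lifting theorem asserts that a potentially semi-stable lift $\rho$ of a modular $\bar{\rho}$ with $\bar{\rho}|_{F(\zeta_p)}$ absolutely irreducible is itself modular, provided that at each place $v \mid p$ every irreducible component of the corresponding local potentially semi-stable deformation ring is automorphic. Because $p$ splits completely in $F$ we have $F_v \cong \Qp$ for each $v \mid p$, so this local hypothesis is precisely the statement for $\GQp$ already established in the preceding sections.

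First I would set up a Taylor--Wiles--Kisin patching construction as in \cite{MR3529394} and \cite{MR3429471}, working on a suitable definite unitary group attached to a CM extension of $F^+$. This produces a patched module over a patched local deformation ring whose support governs which potentially semi-stable deformations of $\bar{\rho}$ are modular. Next, following the strategy of \cite[Theorem 2.2.18]{MR2505297}, I would apply solvable base change, introduction of auxiliary Taylor--Wiles primes, and a twisting argument to put the residual representation into a sufficiently favourable global shape. After these reductions, the global modularity of $\rho$ becomes equivalent to the assertion that, for each $v \mid p$, the specialization of $\Minf$ to the potentially semi-stable deformation ring of $\bar{\rho}|_{G_{F_v}}$ of the appropriate Hodge type and inertial type hits the component containing $\rho|_{G_{F_v}}$; this is supplied by the corollary following Theorem \ref{thm:Breuil-Mezard} applied place by place at each $v \mid p$.

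The hardest part is verifying that the local automorphy statement of Theorem \ref{thm:Breuil-Mezard}, proved via the $\Minf$ of \cite{MR3529394}, truly feeds the Hilbert-modular patching setup of \cite{MR3429471}. Since both are instantiations of the Taylor--Wiles--Kisin method and Breuil--M\'ezard is purely local (hence independent of the chosen global realization), this compatibility is built into the formalism developed in \cite{MR3134019} and \cite{MR3306557}; the genuinely new ingredient supplied by the present paper is the completion of the Breuil--M\'ezard conjecture in the previously open case $p = 3$ and $\rbar \sim (\begin{smallmatrix} \omega & * \\ 0 & \1 \end{smallmatrix}) \otimes \chi$, which removes the corresponding local hypothesis from \cite[Theorem 6.3]{MR3429471} and yields the theorem in the generality stated.
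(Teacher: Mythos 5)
Your proposal is correct and follows essentially the same route as the paper, which simply invokes the method of \cite[Theorem 2.2.18]{MR2505297} and \cite[Theorem 6.3]{MR3429471} together with the now-unconditional local input (the full Breuil--M\'ezard conjecture, equivalently the automorphy of every component of $\Runiv(\sigma)[1/p]$). The only minor imprecision is that the global patching relevant to Hilbert modular forms is the quaternionic one of Kisin/Gee--Kisin rather than the unitary-group construction of \cite{MR3529394}, but as you correctly observe the Breuil--M\'ezard statement is purely local and hence feeds either setup.
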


\begin{rmk}
The result is new when $p=3$ and $\bar{\rho} |_{G_{F_v}}$ is a twist of an extension $\1$ by $\omega$ for some $v | p$.
\end{rmk}

\bibliographystyle{alpha}
\bibliography{Reference}

\end{document}